\documentclass[11pt,a4paper]{article}
\usepackage{amsfonts,amsgen,amsmath,amstext,amsbsy,amsopn,amsthm,amsfonts,amssymb,amscd}
\usepackage{epsf,epsfig}
\usepackage{float}
\usepackage{ebezier,eepic}
\usepackage{color}
\usepackage{tikz}
\usepackage{multirow}
\usepackage{graphicx}
\usepackage{subfigure}
\setlength{\textwidth}{150mm} \setlength{\oddsidemargin}{7mm}
\setlength{\evensidemargin}{7mm} \setlength{\topmargin}{-5mm}
\setlength{\textheight}{245mm} \topmargin -18mm

\newtheorem{thm}{Theorem}[section]

\newtheorem{prop}[thm]{Proposition}
\newtheorem{lem}[thm]{Lemma}
\newtheorem{false statement}{False statement}

\theoremstyle{definition}
\newtheorem{definition}[thm]{Definition}

\newtheorem{conj}[thm]{Conjecture}

\makeatletter \@addtoreset{equation}{section}

\baselineskip 15pt

\def\hm{\mathcal{M}}

\def\hh{\mathcal{H}}
\def\hk{\mathcal{K}}

\begin{document}
\title{\bf\Large The Maximum Number of Cliques in Hypergraphs without Large Matchings}
\date{}
\author{ Erica L.L. Liu$^1$, Jian Wang$^2$\\[10pt]
$^{1}$Center for Applied Mathematics\\
Tianjin University\\
Tianjin 300072, P. R. China\\[6pt]
$^{2}$Department of Mathematics\\
Taiyuan University of Technology\\
Taiyuan 030024, P. R. China\\[6pt]
E-mail:  $^1$liulingling@tju.edu.cn, $^2$wangjian01@tyut.edu.cn
}

\maketitle

\begin{abstract}
Let $[n]$ denote the set $\{1, 2, \ldots, n\}$ and $\mathcal{F}^{(r)}_{n,k,a}$ be an $r$-uniform hypergraph on the vertex set $[n]$ with edge set consisting of all the $r$-element subsets of $[n]$ that contains at least $a$ vertices in $[ak+a-1]$.
For $n\geq 2rk$, Frankl proved that $\mathcal{F}^{(r)}_{n,k,1}$ maximizes the number of  edges   in $r$-uniform hypergraphs on $n$ vertices with the matching number at most $k$. Huang, Loh and Sudakov considered a multicolored version of the Erd\H{o}s matching conjecture, and provided a sufficient condition on the number of edges for a multicolored hypergraph to contain a rainbow matching of size $k$.
In this paper, we show that $\mathcal{F}^{(r)}_{n,k,a}$ maximizes the number of  $s$-cliques in $r$-uniform hypergraphs on $n$ vertices with the matching number at most $k$ for sufficiently large $n$, where $a=\lfloor \frac{s-r}{k} \rfloor+1$. We also obtain a condition on the number of $s$-clques for a multicolored $r$-uniform hypergraph to contain a rainbow matching of size $k$, which reduces to the condition of Huang, Loh and Sudakov when $s=r$.
\end{abstract}

\noindent{\bf Keywords:} hypergraphs, cliques, matchings, rainbow matchings.

\medskip

\section{Introduction}
 An {\it $r$-graph} (or an {\it $r$-uniform hypergraph}) is a pair $\hh=(V, E)$, where $V=V(\hh)$ is a finite set of vertices, and $E=E(\hh)\subset{V\choose r}$ is a family of $r$-element subsets of $V$. We often identify $E(\hh)$ with $\hh$. For any $S\subset V(\hh)$, let $\hh[S]$ be the subhypergraph of $\hh$ induced by $S$ and let  $\hh-S$ denote the subhypergraph of $\hh$ induced by $V(\hh)\setminus S$. For any $S\subset V(\hh)$ with $|S|<r$, let
$$N_{\hh}(S) =\left\{T\in \binom{V(\hh)}{r-|S|}\colon S\cup T\in \hh \right\}$$
and $\deg_{\hh}(S)=|N_{\hh}(S)|$.
We call the elements in $N_{\hh}(S)$ the neighbors of $S$ in $\hh$ and call  $\deg_{\hh}(S)$ the degree of $S$ in $\hh$. For $S=\{v\}$, we often use $H-v$, $N_{\hh}(v)$ and $\deg_{\hh}(v)$ instead of $\hh-\{v\}$, $N_{\hh}(\{v\})$ and $\deg_{\hh}(\{v\})$, respectively.
For any $s\geq r$, an $s$-clique of $\hh$ is a subhypergraph of $\hh$ on $s$ vertices in which every subset of $r$ vertices is an edge of $\hh$. Let $\hk^r_{s}(\hh)$ denote the family of all the $s$-cliques of $\hh$ and let $K^r_{s}(\hh)$ be the cardinality of $\hk^r_{s}(\hh)$. For any $u\in V(\hh)$, we   use $K^r_{s}(u,\hh)$ to denote the number of $s$-cliques in $\hh$ containing $u$. A {\it matching} in $\hh$ is a collection of pairwise disjoint edges of $\hh$. The matching number of $\hh$, denoted by $\nu(\hh)$, is the size of a maximum matching in $\hh$.


\begin{definition}
Let $n, k, r, a$ be positive integers with $n\geq r\geq a$. Define
$$\mathcal{F}^{(r)}_{n,k,a}=\left\{F\in{[n]\choose r}: \left|F\cap[ak+a-1]\right|\geq a\right\}.$$
\end{definition}

Clearly, we have $\nu(\mathcal{F}^{(r)}_{n,k,a})\leq k$. Otherwise, we may assume that $\{E_1,E_2,\ldots,E_{k+1}\}$ is a matching of size $k+1$ in $\mathcal{F}^{(r)}_{n,k,a}$, then we have
 \[
 \left|[ak+a-1]\right|\geq \sum_{i=1}^{k+1} |[ak+a-1] \cap E_i|\geq (k+1)a,
 \]
 a contradiction.

 In 1965, Erd\H{o}s \cite{erdos4} proposed the following  conjecture.

\begin{conj}[The Erd\H{o}s matching conjecture \cite{erdos4}]\label{matchingconj}
Let $\hh$ be an $r$-graph on $n$ vertices with $\nu(\hh)\leq k$. Then
\[
|\hh|\leq \max\left\{ |\mathcal{F}^{(r)}_{n,k,1}|,|\mathcal{F}^{(r)}_{n,k,r}|\right\}.
\]
\end{conj}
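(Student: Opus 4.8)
The plan is to attack the full conjecture by compression together with a double induction, and I will be candid that the argument closes cleanly only outside one intermediate window of $n$; controlling that window is exactly the difficulty that keeps the conjecture open for general $r$. I would first reduce to left-compressed (shifted) $\hh$: each shift operator $S_{ij}$ preserves $|\hh|$ and does not increase $\nu(\hh)$, while both candidate families $\mathcal{F}^{(r)}_{n,k,1}$ and $\mathcal{F}^{(r)}_{n,k,r}$ are already shifted, so it suffices to treat shifted $\hh$. I would also dispose of the trivial range: when $n\le rk+r-1$ one has $\mathcal{F}^{(r)}_{n,k,r}=\binom{[n]}{r}$, making the bound immediate, so the real content lies in $n\ge r(k+1)$.

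\textbf{Double induction via links.} For shifted $\hh$ I would induct on $r$ and $n$, peeling off the largest vertex $n$ and writing
\[
|\hh|=|\hh-n|+\deg_{\hh}(n),
\]
where $\hh-n$ is a shifted $r$-graph on $[n-1]$ with $\nu\le k$, and the link $\hh_n$ is an $(r-1)$-graph on $[n-1]$ with $\deg_{\hh}(n)=|\hh_n|$. The base cases $r\le 2$ (Erd\H{o}s--Gallai) and $k=0$ are known. The central lemma is that $\nu(\hh_n)\le k$ whenever $n\ge r(k+1)$: given a link matching $N_1,\dots,N_{k+1}$, the edges $N_i\cup\{n\}$ lie in $\hh$, and since $\hh$ is shifted I may swap $n$ downward for any unused small vertex; as $n-1-(k+1)(r-1)\ge k$ there are enough free vertices to replace $n$ in $k$ of these edges by distinct small ones, yielding a matching of size $k+1$ in $\hh$, a contradiction. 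Applying the induction hypothesis to $\hh-n$ and to $\hh_n$ then bounds $|\hh|$ by a sum of two maxima.

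\textbf{The star regime.} When both induction terms are governed by the star construction, a Pascal identity collapses the bound exactly: $\binom{n-1}{r}+\binom{n-1}{r-1}=\binom{n}{r}$ together with the same identity applied at $n-k$ gives
\[
|\mathcal{F}^{(r)}_{n-1,k,1}|+|\mathcal{F}^{(r-1)}_{n-1,k,1}|=|\mathcal{F}^{(r)}_{n,k,1}|,
\]
so the induction telescopes precisely onto the star bound. Run self-consistently, this reproduces the known large-$n$ result (star-optimality for $n\ge(2k+1)r-k$).

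\textbf{The main obstacle.} The genuine difficulty is the clique and transition regime $r(k+1)\le n<(2k+1)r-k$, where the crude link bound over-counts. For the pure clique $\mathcal{F}^{(r)}_{n,k,r}$ one has $\deg_{\hh}(n)=0$, yet the lemma only yields $\deg_{\hh}(n)=|\hh_n|\le\max\{|\mathcal{F}^{(r-1)}_{n-1,k,1}|,|\mathcal{F}^{(r-1)}_{n-1,k,r-1}|\}$, which is far too large; correspondingly the two induction terms can be simultaneously big only for structurally incompatible $\hh$. To finish I would run a stability dichotomy: show that a near-extremal shifted $\hh$ either admits a vertex cover of size $k$ (forcing it toward the star) or concentrates on a bounded junta spanned by about $r(k+1)$ vertices (forcing it toward the clique), and then exclude hybrids by weighting $\deg_{\hh}(n)$ against the two candidate maxima. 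Making this dichotomy exact and uniform in $r$ across the transition window is the step I expect to resist a clean argument, and is precisely why the conjecture remains open in general; this plan therefore settles the two extreme regimes rigorously and isolates that window as the sole remaining bottleneck.
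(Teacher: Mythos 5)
You were asked to prove a statement that the paper itself does not prove: this is Conjecture \ref{matchingconj}, the Erd\H{o}s matching conjecture, which is stated as an open problem and is used in the paper only through Frankl's partial result (Theorem \ref{matchinglem}, valid for $n\geq (2k+1)r-k$). So there is no paper proof to compare against, and your proposal must stand or fall as a proof on its own. It falls, and you say so yourself: the entire content of the conjecture beyond what is already known is the transition window $r(k+1)\leq n<(2k+1)r-k$, and for that window you offer only a program (``stability dichotomy,'' excluding ``hybrids'' by ``weighting'') with no definitions, no lemmas, and no argument. Declaring that this step ``will resist a clean argument'' is an accurate sociological observation, not a proof step; a proposal that rigorously handles only the regimes where the result is trivial ($n\leq rk+r-1$, where $\mathcal{F}^{(r)}_{n,k,r}=\binom{[n]}{r}$) or already a theorem of Frankl does not constitute a proof of the conjecture.

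On the parts you do argue: the reductions are essentially sound but one claim is overstated. Shifting preserves $|\hh|$ and does not increase $\nu(\hh)$, and your link lemma is correct as stated --- if $\hh$ is shifted, $\nu(\hh)\leq k$, and $N_1,\dots,N_{k+1}$ is a matching in the link of the top vertex $n$, then since $n-1-(k+1)(r-1)\geq k$ when $n\geq r(k+1)$ one can shift $n$ down to $k$ distinct unused vertices and produce $k+1$ disjoint edges, a contradiction. The Pascal identity
\[
\left|\mathcal{F}^{(r)}_{n-1,k,1}\right|+\left|\mathcal{F}^{(r-1)}_{n-1,k,1}\right|=\left|\mathcal{F}^{(r)}_{n,k,1}\right|
\]
is also correct, since $|\mathcal{F}^{(r)}_{n,k,1}|=\binom{n}{r}-\binom{n-k}{r}$. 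But the induction $|\hh|\leq |\hh-n|+\deg_{\hh}(n)$ with each term bounded by a maximum of two candidate values does \emph{not} ``telescope precisely onto the star bound'': when the maximum for $\hh-n$ and the maximum for the link are attained by different families (as happens throughout and near the transition window), the two-term recursion strictly overshoots $\max\{|\mathcal{F}^{(r)}_{n,k,1}|,|\mathcal{F}^{(r)}_{n,k,r}|\}$, so even your claim to ``reproduce'' Frankl's theorem for $n\geq(2k+1)r-k$ by this scheme is unestablished --- Frankl's actual proof uses a more delicate analysis of shifted families than this naive recursion. In short: the known regimes are handled by citing or re-deriving known results, the recursion does not self-consistently close even there without further work, and the genuinely open regime is left open, which is exactly why this statement remains a conjecture.
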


In 2013, Frankl proved that  Conjecture \ref{matchingconj} holds for $n\geq (2k+1)r-k$.

\begin{thm}[Frankl \cite{frankl1}]\label{matchinglem}
Let $\hh$ be an $r$-graph on $n$ vertices with $\nu(\hh)\leq k$. If $n\geq (2k+1)r-k$, then $|\hh|\leq  |\mathcal{F}^{(r)}_{n,k,1}|$.
\end{thm}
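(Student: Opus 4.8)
The plan is to first reduce to the case where $\hh$ is \emph{shifted} (left-compressed) and then to induct on $k$. Recall that for $i<j$ the shift $S_{ij}$ replaces, in each edge $F$ with $j\in F$ and $i\notin F$, the element $j$ by $i$ whenever $(F\setminus\{j\})\cup\{i\}$ is not already present. A routine verification shows that $S_{ij}$ never increases the matching number and leaves $|\hh|$ unchanged, while $\mathcal{F}^{(r)}_{n,k,1}$ is itself invariant under every $S_{ij}$; applying shifts until none alters $\hh$, I may therefore assume $\hh$ is shifted. I also assume $\nu(\hh)=k$, since if $\nu(\hh)\le k-1$ the induction hypothesis already yields $|\hh|\le|\mathcal{F}^{(r)}_{n,k-1,1}|\le|\mathcal{F}^{(r)}_{n,k,1}|$, with the trivial base case $k=0$ (where $\nu(\hh)\le 0$ forces $\hh=\emptyset=\mathcal{F}^{(r)}_{n,0,1}$).

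For the inductive step I single out the vertex $1$ and split $\hh$ into the edges through $1$ and those avoiding it, writing $|\hh|=|\hh_1|+|\hh-1|$, where $\hh_1=\{F\setminus\{1\}\colon 1\in F\in\hh\}$ is the link of $1$ (an $(r-1)$-graph on $[2,n]$) and $\hh-1$ is a shifted $r$-graph on the $n-1$ vertices $[2,n]$. The trivial bound $|\hh_1|\le\binom{n-1}{r-1}$ together with the identity $\binom{n-1}{r-1}+|\mathcal{F}^{(r)}_{n-1,k-1,1}|=|\mathcal{F}^{(r)}_{n,k,1}|$ (which one checks by expanding both sides as $\binom{n}{r}-\binom{n-k}{r}$) shows that it suffices to prove $|\hh-1|\le|\mathcal{F}^{(r)}_{n-1,k-1,1}|$. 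If $\nu(\hh-1)\le k-1$, this is immediate from the induction hypothesis, because $n-1\ge (2(k-1)+1)r-(k-1)$ still holds throughout our range (it reduces to $r\ge 1$). This is precisely the recursion realized by the extremal family, so the only thing left to control is the degenerate case $\nu(\hh-1)=k$.

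The crux, and where I expect essentially all the difficulty to lie, is exactly this last case, in which deleting vertex $1$ fails to lower the matching number and the clean recursion breaks down. Here a maximum matching $\{A_1,\dots,A_k\}$ of $\hh$ can be chosen to avoid $1$; setting $Y=\bigcup_i A_i$ with $|Y|=rk$, maximality forces every edge of $\hh$ to meet $Y$, but the resulting estimate $\binom{n}{r}-\binom{n-rk}{r}$ is far too weak — beating this trivial bound by exploiting the matching constraint is the essential obstacle, shared with the Erd\H{o}s matching conjecture itself. The task is to combine the shifted structure with $\nu(\hh)\le k$ to establish $|\hh-1|\le|\mathcal{F}^{(r)}_{n-1,k-1,1}|$ nonetheless, and this is where the hypothesis $n\ge(2k+1)r-k$ must be genuinely used. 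One plausible route is to locate the first canonical interval $[(j-1)r+1,jr]$ (with $2\le j\le k+1$) that is not an edge — such an interval must exist, since a shifted family has $\nu\ge k+1$ exactly when it contains all of $[1,r],[r+1,2r],\dots,[kr+1,(k+1)r]$ — and then to use shiftedness to propagate this single missing edge into enough missing $r$-sets meeting $[k-1]$ to offset, via an injection, every edge of $\hh-1$ lying outside $\mathcal{F}^{(r)}_{n-1,k-1,1}$; the surplus is absorbed precisely once $n$ exceeds the stated threshold. Making this propagation quantitatively precise is the heart of the matter, after which the two cases combine to give $|\hh|\le|\mathcal{F}^{(r)}_{n,k,1}|$ and close the induction.
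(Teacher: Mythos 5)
This statement is not proved in the paper at all---it is quoted as Frankl's theorem from \cite{frankl1}---so your attempt has to stand on its own, and it does not. Everything you actually carry out is correct but routine: the reduction to shifted families, the base case $k=0$, the split $|\hh|=|\hh_1|+|\hh-1|$, the identity $\binom{n-1}{r-1}+|\mathcal{F}^{(r)}_{n-1,k-1,1}|=|\mathcal{F}^{(r)}_{n,k,1}|$, and the case $\nu(\hh-1)\le k-1$. This machinery reduces the theorem to the single remaining case $\nu(\hh-1)=k$, which is exactly where the entire content of the theorem lives, and there you offer only a speculative sketch (``one plausible route is\ldots'', ``making this propagation quantitatively precise is the heart of the matter''). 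You yourself concede that the hypothesis $n\ge(2k+1)r-k$ has not yet been used. Since the statement is false without a lower bound on $n$ of this kind---for $r+k\le n\le rk+r-1$ the complete $r$-graph has $\nu\le k$ and strictly more edges than $\mathcal{F}^{(r)}_{n,k,1}$, cf.\ the paper's discussion of $\mathcal{F}^{(r)}_{n,k,r}$ after Theorem \ref{th1}---no argument that omits this hypothesis in the hard case can possibly close. So this is a genuine gap, not a missing detail.

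The route you gesture at also has concrete defects. First, your characterization of matching numbers of shifted families is wrong as a biconditional: a shifted family has $\nu\ge k+1$ if and only if it contains the single set $\{k+1,\,2(k+1),\ldots,r(k+1)\}$ (equivalently, the $k+1$ pairwise disjoint sets $\{i,\,(k+1)+i,\ldots,(r-1)(k+1)+i\}$), not ``exactly when it contains all of $[1,r],[r+1,2r],\ldots,[kr+1,(k+1)r]$''; only the direction you actually use (all intervals present $\Rightarrow\nu\ge k+1$) is valid. Second, and more damagingly, the propagation runs the wrong way: shiftedness propagates \emph{absence} upward in the order $\prec$, so a missing interval $[(j-1)r+1,jr]$ with $j\ge 2$ forces the absence only of $r$-sets all of whose elements are at least $r+1$; when $k\le r$ such sets avoid $[2,k]$ entirely, whereas the injection you need must produce non-edges that \emph{meet} $[2,k]$, in order to offset the edges of $\hh-1$ that avoid $[2,k]$. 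The arguments that do manufacture such non-edges are greedy degree bounds in the spirit of the paper's Lemma \ref{le1}, but those cost factors of order $rk\binom{n-2}{r-2}$ and are only useful when $n$ grows at least quadratically in $r$ and $k$ (this is exactly the regime of Theorem \ref{th1} (I) and Theorem \ref{th3}); reaching the linear threshold $(2k+1)r-k$ is precisely the achievement of Frankl's argument in \cite{frankl1}, and nothing in your proposal substitutes for it.
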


For recent results on Conjecture \ref{matchingconj}, we refer the reader to \cite{frankl1,frankl2,frankl3}. For ordinary
 graphs, Alon and Shikhelman \cite{alon} introduced a generalization of the usual Tur\'{a}n problem, which is often called the generalized Tur\'{a}n problem. Given two graphs $T$ and $H$, the {\it generalized Tur\'{a}n number}, denoted by $ex(n,T,H)$, is defined to be the maximum number of copies of $T$ in an $H$-free graph on $n$ vertices. The first result in this
 direction is due to Zykov \cite{zykov} and independently to Erd\H{o}s \cite{erdos}, who determined $ex(n,K_s,K_t)$.  The second author \cite{wj} determined $ex(n,K_s,M_{k+1})$, where $M_{k+1}$ is a matching of size $k+1$. Recently, the study of the generalized Tur\'{a}n problem has received much attention, see \cite{alon,gerbner2,gerbner3,gishboliner,gowers,jiema,luo,boning}.

Motivated by the Erd\H{o}s matching conjecture and the generalized Tur\'{a}n problem, we determine the maximum number of $s$-cliques in an $r$-graph on $n$ vertices with matching number at most $k$.

\begin{thm}\label{th1}
Let $n, k, r, s$ be integers and $\hh$ be an $r$-graph on $n$ vertices with $\nu(\hh)\leq k$.

\begin{itemize}
  \item[(I)] If $r\leq s\leq k+r-1$ and $n\geq 4(er)^{s-r+2}k$, then $K_s^r(\hh)\leq K_s^r(\mathcal{F}^{(r)}_{n,k,1})$;
  \item[(II)] If $k+r\leq s\leq (r-1)(k+1)$ and $n\geq 4r^2k(er/(a-1))^{s-r+a}$, then
$K_s^r(\hh)\leq K_s^r(\mathcal{F}^{(r)}_{n,k,a})$,
where $a=\lfloor \frac{s-r}{k} \rfloor+1$;
  \item[(III)] If $(r-1)k+r\leq s\leq rk+r-1$ and $n\geq rk+r-1$, then
$K_s^r(\hh)\leq K_s^r(\mathcal{F}^{(r)}_{n,k,r})$. Moreover, if $K_s^r(\hh)< K_s^r(\mathcal{F}^{(r)}_{n,k,r})$, then $K_s^r(\hh)\leq {rk+r-1\choose s}-{rk-1\choose s-r}$.
\end{itemize}
\end{thm}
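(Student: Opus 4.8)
First I would pin down the quantity to be beaten. Writing $B=[ak+a-1]$, a set $S\in\binom{[n]}{s}$ is an $s$-clique of $\mathcal{F}^{(r)}_{n,k,a}$ exactly when every $r$-subset of $S$ meets $B$ in at least $a$ vertices; since the scarcest such subset uses as few $B$-vertices as possible, this is equivalent to $|S\cap B|\ge s-r+a$. Hence
\[
K_s^r(\mathcal{F}^{(r)}_{n,k,a})=\sum_{b=s-r+a}^{\min\{s,\,ak+a-1\}}\binom{ak+a-1}{b}\binom{n-ak-a+1}{s-b},
\]
whose leading term in $n$ is $\binom{ak+a-1}{s-r+a}\binom{n-ak-a+1}{r-a}=\Theta(n^{r-a})$, coming from $b=s-r+a$, i.e.\ from the maximal number $r-a$ of vertices allowed outside $B$. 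The choice $a=\lfloor(s-r)/k\rfloor+1$ is precisely what forces $s-r+a\le ak+a-1$, so the sum is nonempty and the exponent $r-a$ is the correct order to aim for in (II); in (I) one has $a=1$ and the order is $n^{r-1}$.

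\textbf{Core extraction for (I) and (II).} For the upper bound I would sort vertices by clique-degree. Fix a threshold $\theta$ of order $n^{r-a-1}$ (calibrated to the stated bounds on $n$) and let $D=\{v:K_s^r(v,\hh)\ge\theta\}$ be the \emph{heavy core} and $L=V(\hh)\setminus D$ the light vertices; note that in $\mathcal{F}^{(r)}_{n,k,a}$ itself the core vertices are exactly $B$ (clique-degree $\Theta(n^{r-a})$) and the remaining vertices are light (clique-degree $\Theta(n^{r-a-1})$), which is why $\theta$ must sit between these two orders. Every $s$-clique $S$ splits according to $|S\cap L|$, and I would estimate separately the \emph{main family} $|S\cap L|\le r-a$ (equivalently $|S\cap D|\ge s-r+a$) and the \emph{extra family} $|S\cap L|\ge r-a+1$.

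\textbf{The two pillars and the main obstacle.} The proof rests on (i) the bound $|D|\le ak+a-1$ and (ii) the negligibility of the extra family. Granting (i), the main family is controlled by $\sum_{b\ge s-r+a}\binom{|D|}{b}\binom{n-|D|}{s-b}\le K_s^r(\mathcal{F}^{(r)}_{n,k,a})$, which already gives the right leading behaviour. The value $a$ is exactly the blocksize at which things break: if $D$ contained $a(k+1)=ak+a$ heavy vertices one expects to split them into $k+1$ blocks of size $a$ and, using the large co-degrees forced by heaviness, assemble $k+1$ pairwise disjoint edges each absorbing one block, contradicting $\nu(\hh)\le k$. Making this rigorous is the hard part, precisely because edges are allowed to \emph{share} core vertices---this is the very mechanism by which $\mathcal{F}^{(r)}_{n,k,a}$ keeps $|B|=ak+a-1$ core vertices while still satisfying $\nu=k$---so a naive greedy selection fails and the disjointness must be extracted more carefully. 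For (ii) I would use $\nu(\hh)\le k$ a second time: a clique in the extra family carries $\ge r-a+1$ light vertices, each light $(r-a+1)$-set inside a clique extends (with $a-1$ core vertices) to an edge, and bounding how many disjoint such configurations can coexist, together with the definition of $\theta$ and the hypotheses $n\ge 4(er)^{s-r+2}k$ and $n\ge 4r^2k(er/(a-1))^{s-r+a}$, makes the extra family lower order. The delicate endgame is the tight case $|D|=ak+a-1$, where the main bound equals the extremal value to leading order; there one argues by stability that $\hh$ is forced to resemble $\mathcal{F}^{(r)}_{n,k,a}$ so closely that it cannot exceed it.

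\textbf{Part (III).} Here $a=r$, so $\mathcal{F}^{(r)}_{n,k,r}=\binom{[rk+r-1]}{r}$ and $K_s^r(\mathcal{F}^{(r)}_{n,k,r})=\binom{rk+r-1}{s}$; no largeness of $n$ is available, so the core-extraction constants degenerate and I would argue instead by concentration. Because $s\ge(r-1)k+r$ forces every $s$-clique to carry an internal matching of size close to $k$, an augmentation argument shows that the union of any two $s$-cliques spans at most $rk+r-1$ vertices (for larger spans one pieces together a matching of size $k+1$), and promoting this to all cliques places every $s$-clique inside a common set $C$ with $|C|\le rk+r-1$. Then $K_s^r(\hh)\le\binom{|C|}{s}\le\binom{rk+r-1}{s}$. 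Finally, if equality fails then either $|C|<rk+r-1$ or some $r$-subset of $C$ is a non-edge; in the latter case every $s$-subset of $C$ containing that $r$-set---there are $\binom{rk-1}{s-r}$ of them---fails to be a clique, and a short comparison shows the former case is at least as lossy, yielding $K_s^r(\hh)\le\binom{rk+r-1}{s}-\binom{rk-1}{s-r}$. The spanning bound $|C|\le rk+r-1$ is the main work in this part.
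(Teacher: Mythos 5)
Your computation of $K_s^r(\mathcal{F}^{(r)}_{n,k,a})$ and your identification of the target shape (a core of $ak+a-1$ vertices plus lower-order contributions) are correct, but the two pillars carrying parts (I) and (II) are not established, and one rests on a mechanism that fails. First, heaviness of individual vertices gives no control over $a$-sets of heavy vertices: two vertices can each lie in $\Theta(n^{r-a})$ many $s$-cliques while no edge contains both (e.g.\ take $\mathcal{F}^{(r)}_{n,k,1}$ and delete all edges containing both $1$ and $2$), so the "large co-degrees forced by heaviness" need not exist and the block-assembly contradiction cannot even start; you flag this as "the hard part" but offer no substitute. The threshold calibration is also genuinely delicate: with $\theta$ a constant times $n^{r-a-1}$, pillar (i) is simply false, since $\mathcal{F}^{(r)}_{n,k,a+1}$ has $\nu\le k$ and $(a+1)k+a$ vertices of clique-degree $\Theta(n^{r-a-1})$. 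Second, and more fatally, even granting both pillars your conclusion does not follow: "main family $\le K_s^r(\mathcal{F}^{(r)}_{n,k,a})$" together with "extra family positive but of lower order" still permits the total to exceed $K_s^r(\mathcal{F}^{(r)}_{n,k,a})$. The entire content of the theorem sits in this tight case, and your "stability endgame" is a label, not an argument. The paper closes it with a dichotomy in which one branch yields exact containment and the other a deficit of top order: for (II) it shifts $\mathcal{H}$ (Lemma \ref{le4}), shows every edge lying in a clique meets $[rk+a-1]$ --- note: $rk+a-1$, not $ak+a-1$ --- in at least $a$ vertices (Proposition \ref{prop1}), forms the auxiliary $a$-graph $\mathcal{H}^*$ of $a$-sets of degree greater than $rk\binom{n-a-1}{r-a-1}$, proves $\mathcal{H}^*$ stable with $\nu(\mathcal{H}^*)\le k$, and applies part (III) to $\mathcal{H}^*$: either $K^a_{s-r+a}(\mathcal{H}^*)$ is maximal, in which case $\mathcal{H}\subseteq\mathcal{F}^{(r)}_{n,k,a}$ exactly (no error terms at all), or it falls short by $\binom{ak-1}{s-r}$, producing a deficit $\binom{ak-1}{s-r}\binom{n-s+r-a}{r-a}$ of order $n^{r-a}$ that absorbs every lower-order error. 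For (I) the analogous dichotomy is run by double induction on $(s,k)$: if some vertex $u$ has $\nu(\mathcal{H}-u)=k-1$, induction plus the identity \eqref{eq6} gives the exact bound; otherwise every degree is at most $rk\binom{n-2}{r-2}$ and the direct count is $O(n^{r-2})$, strictly below the extremal $\Theta(n^{r-1})$. Your plan has no branch capable of producing the exact extremal value.

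For (III), the pairwise statement (two $s$-cliques spanning at least $rk+r$ vertices force $\nu\ge k+1$) is believable after careful remainder analysis, but the step "promoting this to all cliques" is a genuine gap: a bound on pairwise unions does not formally place all cliques inside one $(rk+r-1)$-set, and the promotion would require extracting $k+1$ disjoint edges from unions of arbitrarily many overlapping cliques --- that is where the difficulty lives, and it is unaddressed. The paper avoids the issue entirely: delete the edges lying in no $s$-clique, shift to a stable hypergraph (Lemma \ref{le4} guarantees the clique count does not drop and the every-edge-in-a-clique property survives), and Proposition \ref{prop1} with $a=r$ places every surviving edge inside $[rk+r-1]$; the "moreover" clause is then immediate, since a proper subhypergraph of $\mathcal{F}^{(r)}_{n,k,r}$ misses some $r$-set $T\subseteq[rk+r-1]$ and with it the $\binom{rk-1}{s-r}$ potential cliques containing $T$. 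Note also that shifting transfers only the numerical bound back to the original $\mathcal{H}$; the structural claim you rely on (that the original hypergraph's cliques all lie in a common $(rk+r-1)$-set) is stronger than what the paper proves and would itself require proof.
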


Based on the construction $\mathcal{F}^{(r)}_{n,k,a}$, one sees that the upper bounds in  Theorem \ref{th1} are tight. Let $k, r, s$ be integers with $r\leq s\leq (r-1)(k+1)$,  $a=\lfloor \frac{s-r}{k} \rfloor+1$ and
\[
n^*(k,r,s)=\left(\frac{r}{a}\right)^{\frac{s-r+a}{r-a}}\left(\frac{rk+r-1-s}{s}\right).
\]
If $n\leq n^*(k,r,s)$, we have
\begin{align*}
K_s^r(\mathcal{F}^{(r)}_{n,k,a})&\leq\binom{ak+a-1}{s-r+a}\binom{n-s+r-a}{r-a} \\[5pt]
&\leq \left(\frac{a}{r}\right)^{s-r+a}\binom{rk+r-1}{s-r+a}\frac{n^{r-a}}{(r-a)!} \\[5pt]
&< \binom{rk+r-1}{s-r+a}\left(\frac{rk+r-s-1}{s}\right)^{r-a}\\[5pt]
&\leq \binom{rk+r-1}{s} =K_s^r(\mathcal{F}^{(r)}_{n,k,r}).
\end{align*}
Note that $\mathcal{F}^{(r)}_{n,k,r}$ is an $r$-graph on $n$ vertices with matching number at most $k$.  Since $K_s^r(\mathcal{F}^{(r)}_{n,k,r}) > K_s^r(\mathcal{F}^{(r)}_{n,k,a})$ for  $n\leq n^*(k,r,s)$, (I) and (II) in Theorem \ref{th1} hold if and only if $n\geq n_0(k,r,s)$ for some integer $n_0(k,r,s)>n^*(k,r,s)$.


Huang, Loh and Sudakov \cite{huang} considered a multi-colored generalization of the Erd\H{o}s matching conjecture and they proved the following theorem.

\begin{thm}[Huang, Loh and Sudakov \cite{huang}]\label{huang-loh-sudakov}
Let $\mathcal{F}_1,\ldots, \mathcal{F}_k$ be $r$-graphs on the vertex set $[n]$, where $k\leq \frac{n}{3r^2}$, and for any $i$, $|\mathcal{F}_i|>|\mathcal{F}^{(r)}_{n,k-1,1}|$. Then there exist pairwise disjoint edges $F_1\in\mathcal{F}_1, \ldots, F_k\in\mathcal{F}_k$.
\end{thm}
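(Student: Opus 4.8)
The plan is to establish the contrapositive in its useful direction: assuming $|\mathcal{F}_i|>|\mathcal{F}^{(r)}_{n,k-1,1}|$ for every $i\in[k]$, I will construct pairwise disjoint edges $F_1\in\mathcal{F}_1,\dots,F_k\in\mathcal{F}_k$. The first step is to feed the density hypothesis into Frankl's theorem. Since $n\geq 3r^2k\geq (2k-1)r-k+1$, Theorem~\ref{matchinglem} applies with parameter $k-1$, and its contrapositive turns $|\mathcal{F}_i|>|\mathcal{F}^{(r)}_{n,k-1,1}|$ into $\nu(\mathcal{F}_i)\geq k$ for each color $i$. Thus every individual color class already contains $k$ pairwise disjoint edges, and what remains is the genuinely rainbow problem: to choose the representatives coherently so that the $k$ chosen edges are simultaneously disjoint.

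It is important to see why this last step is not automatic. The standard greedy sufficient condition for a size-$k$ rainbow matching among $r$-uniform families is $\nu(\mathcal{F}_i)\geq r(k-1)+1$ for every $i$: at the $j$-th stage at most $(j-1)r\leq (k-1)r$ vertices are committed, each destroying at most one edge of a fixed matching, so an unused edge always survives. Via Theorem~\ref{matchinglem} this robust condition would require $|\mathcal{F}_i|>|\mathcal{F}^{(r)}_{n,r(k-1),1}|$, which exceeds our hypothesis $|\mathcal{F}_i|>|\mathcal{F}^{(r)}_{n,k-1,1}|$ by a factor of order $r$. Consequently the proof cannot rely on matching numbers alone and must exploit the actual edge densities and the structure of near-extremal families.

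Concretely, I would run a maximum-rainbow-matching argument. Let $F_1\in\mathcal{F}_1,\dots,F_t\in\mathcal{F}_t$ be a maximum rainbow matching and suppose for contradiction that $t\leq k-1$; put $U=\bigcup_{i\leq t}V(F_i)$, so $|U|=tr\leq(k-1)r$. Maximality forces every edge of each unused class $\mathcal{F}_j$ to meet $U$, giving the crude bound $|\mathcal{F}_j|\leq\binom{n}{r}-\binom{n-tr}{r}$. This only certifies a cover by $tr$ vertices, whereas the threshold $|\mathcal{F}^{(r)}_{n,k-1,1}|$ corresponds to a cover by $k-1$ vertices; closing this factor-$r$ gap is the whole difficulty. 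Rather than seeking an edge disjoint from all of $U$ (which need not exist), I would look for an augmenting reconfiguration: an unused-color edge meeting $U$ inside a single $V(F_i)$ can be swapped in to free color $i$, after which the $r$ released vertices together with the abundance guaranteed by $\nu(\mathcal{F}_i)\geq k$ are used to re-extend. The slack that makes such alternating swaps terminate in an actual extension is $(k-1)r^2/n<1/3$, which is exactly what the hypothesis $k\leq n/(3r^2)$ supplies; quantitatively one aims to show that the number of edges of $\mathcal{F}_j$ resisting every reconfiguration is strictly smaller than $|\mathcal{F}_j|$.

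The step I expect to be hardest is precisely this reconfiguration/covering analysis carried out uniformly over all $k$ colors at once, and it is genuinely necessary: when every $\mathcal{F}_i$ equals $\mathcal{F}^{(r)}_{n,k-1,1}$ there is no rainbow matching of size $k$, so the construction sits exactly at the threshold and the strict inequality must be used in full. The crux is therefore to amplify a single edge of slack per color, through the room afforded by $n\geq 3r^2k$, into a simultaneous disjointness guarantee — that is, to convert the per-color information that near-extremal classes are nearly covered by $k-1$ vertices into the statement that one common system of disjoint representatives exists. Controlling the interaction of the $k$ color classes on the shared ground set $[n]$, rather than any single density estimate, is where the real work lies.
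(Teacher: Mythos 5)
Your proposal is not a proof; it is a correct reduction followed by an accurate description of the main difficulty, with that difficulty left unresolved. The first step (Frankl's Theorem \ref{matchinglem}, applied with parameter $k-1$ in contrapositive form, gives $\nu(\mathcal{F}_i)\geq k$ for every $i$) is valid, and your observation that this alone cannot suffice --- the greedy argument needs $\nu(\mathcal{F}_i)\geq r(k-1)+1$, so matching number is too coarse an invariant --- is also correct. But everything after that is a plan, not an argument: you never prove that the alternating ``swap and re-extend'' process terminates, never quantify how many edges of an unused colour class can resist every reconfiguration, and never show that such resistance forces $|\mathcal{F}_j|\leq|\mathcal{F}^{(r)}_{n,k-1,1}|$. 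This is exactly where augmenting-path reasoning breaks down for $r$-uniform hypergraphs with $r\geq 3$: a swap frees $r$ vertices but can be re-blocked in many ways, and there is no analogue of Berge's lemma to make the process converge. Since the entire content of the theorem lives in this step, the proposal has a genuine gap.

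For comparison: the paper quotes this statement from Huang--Loh--Sudakov without reproving it, but Section 4 (Lemma \ref{le6} and Theorem \ref{th3}, whose case $s=r$, where $K_r^r(\mathcal{F}_i)=|\mathcal{F}_i|$, recovers this theorem with a worse constant in the bound on $n$) shows what a working argument looks like, and it is structurally different from yours. One takes a family with no rainbow matching that maximizes $\sum_i|\mathcal{F}_i|$; this maximality guarantees that deleting any non-complete colour class leaves a family that \emph{does} have a rainbow matching, which supplies for free the covering structure your swaps were meant to create. One then introduces the sets $X_i$, $Y_i$ of vertices of large degree, shows via the greedy rainbow lemma (Lemma \ref{le3}) applied to link hypergraphs that $\{X_1,\dots,X_l\}$ admits no system of distinct representatives, and invokes Hall's theorem. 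The resulting dichotomy --- either some class has few high-degree vertices, in which case counting $s$-cliques (here, edges) through the rainbow matching of the remaining classes yields $|\mathcal{F}_i|\leq|\mathcal{F}^{(r)}_{n,k-1,1}|$, or else every edge of every non-complete class passes through one common set of at most $k-1$ vertices, which gives the same bound --- contradicts the density hypothesis in either case. No augmentation is ever performed; the strict inequality is consumed entirely by these two counting estimates. If you want to salvage your approach, you would essentially have to re-derive this structure, so I recommend abandoning the reconfiguration idea and adopting the maximal-counterexample, high-degree-vertex, Hall-type scheme.
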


In this paper, we generalize their result by loosing the conditions on $\mathcal{F}_i$.

\begin{thm}\label{th3}
Let $n, k, r, t$ be integers  such that $r\leq t\leq k+r-2$ and $n\geq 4k(t-r+2)(er)^{t-r+2}$. Let $\mathcal{F}_1, \mathcal{F}_2, \ldots, \mathcal{F}_k$ be $r$-graphs on the vertex set $V$ of size $n$. If for any $i\in \{1,2,\ldots,k\}$, there exists some $s\in \{r,r+1,\ldots,t\}$ such that $K_s^r(\mathcal{F}_i)>K_s^r(\mathcal{F}^{(r)}_{n,k-1,1})$.
Then there exist pairwise disjoint edges $F_1\in\mathcal{F}_1, \ldots, F_k\in\mathcal{F}_k$.
\end{thm}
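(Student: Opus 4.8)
The plan is to deduce Theorem \ref{th3} from Theorem \ref{th1}(I) together with an extremal analysis of a maximum rainbow matching. First I would record the consequence of the hypothesis that each family is ``matching-rich''. Fix $i$ and let $s_i\in\{r,\dots,t\}$ be a clique size witnessing $K_{s_i}^r(\mathcal{F}_i)>K_{s_i}^r(\mathcal{F}^{(r)}_{n,k-1,1})$. Since $r\le s_i\le t\le k+r-2=(k-1)+(r-1)$ and $n\ge 4k(t-r+2)(er)^{t-r+2}\ge 4(k-1)(er)^{s_i-r+2}$, Theorem \ref{th1}(I) applied with $k$ replaced by $k-1$ asserts that every $r$-graph $\mathcal{G}$ on $[n]$ with $\nu(\mathcal{G})\le k-1$ satisfies $K_{s_i}^r(\mathcal{G})\le K_{s_i}^r(\mathcal{F}^{(r)}_{n,k-1,1})$. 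Contrapositively, $\nu(\mathcal{F}_i)\ge k$ for every $i$, so each family individually contains a matching of size $k$; the whole difficulty is to make these matchings \emph{rainbow}.

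The second ingredient I would isolate is a cover estimate. If every edge of an $r$-graph $\mathcal{G}$ on $[n]$ meets a fixed set $U$, then every $s$-clique of $\mathcal{G}$ has at most $r-1$ vertices outside $U$, so
\[
K_s^r(\mathcal{G})\le \sum_{j=0}^{r-1}\binom{|U|}{s-j}\binom{n-|U|}{j}.
\]
When $|U|$ is bounded and $n$ is large the $j=r-1$ term dominates and gives $K_s^r(\mathcal{G})\le \binom{|U|}{s-r+1}\binom{n}{r-1}(1+o(1))$, whereas $K_s^r(\mathcal{F}^{(r)}_{n,k-1,1})=\sum_{j=0}^{r-1}\binom{k-1}{s-j}\binom{n-k+1}{j}=\binom{k-1}{s-r+1}\binom{n}{r-1}(1+o(1))$. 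Hence a family covered by a set of size at most $k-2$ cannot reach the threshold, while a family covered by a set of size $k-1$ can only barely do so; the gap between $\binom{k-1}{s-r+1}$ and $\binom{k-2}{s-r+1}$ is the quantity the argument must exploit. It is precisely to swamp the $o(1)$ error here, uniformly over all at most $k(t-r+1)$ relevant pairs $(i,s)$, that the bound $n\ge 4k(t-r+2)(er)^{t-r+2}$ is wanted.

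With these in hand I would argue by contradiction. Suppose there is no rainbow matching of size $k$ and let $M=\{F_i:i\in I\}$ be a maximum rainbow matching, so $m:=|I|\le k-1$; put $W=\bigcup_{i\in I}V(F_i)$, with $|W|=rm$. By maximality every edge of every unused family meets $W$. If $m<(k-1)/r$, then $|W|=rm\le k-2$, and the cover estimate applied to a single unused family $\mathcal{F}_\ell$ with $U=W$ already contradicts $K_{s_\ell}^r(\mathcal{F}_\ell)>K_{s_\ell}^r(\mathcal{F}^{(r)}_{n,k-1,1})$. So the easy regime is disposed of, and the whole point is the range $(k-1)/r\le m\le k-1$.

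The main obstacle lives exactly in that range, where the naive cover $W$ is too large: indeed a single unused family has covering number at least $\nu(\mathcal{F}_\ell)\ge k$, so no cover of size $\le k-1$ exists for it, and applying the estimate to one family is hopeless. The plan is to trade the factor $r$ for a factor $1$ by an SDR/Hall-type reduction. Using that each unused family contains a matching of size $\ge k$ all of whose edges meet $W$, I would run an augmenting/rotation argument that replaces $M$ by another maximum rainbow matching and extracts a sub-collection $S$ of families sharing a \emph{common} cover $U$ with $|U|<|S|\le k$. The cover estimate applied to each family of $S$ then closes the argument: if $|U|\le k-2$ it contradicts the threshold directly, and in the boundary case $|S|=k$, $|U|=k-1$ it contradicts $\nu(\mathcal{F}_i)\ge k$, since a cover of size $k-1$ would force $\nu\le k-1$. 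Carrying out this cover-shrinking step — reconciling the matching-based cover of size $rm$ with the critical size $k-1$, so that several near-extremal families can be forced to share one small cover — is where essentially all the work lies; the surrounding estimates are routine given the explicit formula for $K_s^r(\mathcal{F}^{(r)}_{n,k-1,1})$ above.
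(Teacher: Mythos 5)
Your preparatory steps are correct as far as they go: the deduction $\nu(\mathcal{F}_i)\ge k$ from Theorem \ref{th1}(I), the cover estimate, and the disposal of the easy regime $rm\le k-2$ all check out. But the proof has a genuine gap exactly where you admit ``essentially all the work lies'': the cover-shrinking step --- extracting a subcollection $S$ of families with a common cover $U$, $|U|<|S|\le k$, via an ``augmenting/rotation argument'' --- is never carried out, and it cannot be carried out by matching rotations alone. For $r\ge 3$ there is no K\"onig-type duality in hypergraphs: the non-existence of a (rainbow) matching of size $k$ carries no certificate in the form of a small cover. Concretely, if every $\mathcal{F}_i$ is the complete $r$-graph on a fixed set of $rk-1$ vertices, there is no rainbow matching of size $k$, yet the minimum cover of each family has size $r(k-1)\ge k$, so no subcollection admits a common cover smaller than its own cardinality. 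These families of course violate the clique-count hypothesis when $n$ is large, but that is precisely the point: the density hypothesis must be invoked \emph{inside} the structural extraction, not only at the end as the source of a contradiction, and your plan supplies no mechanism for doing so. Relatedly, once the cover has size about $k-1$ or larger, pure cover counting can never beat the threshold (as you note yourself); what is needed is to bound the number of extensions $B$ of the chosen $(s-r+1)$-set $A$ by the \emph{degrees} of the vertices of $A$, so that only the few high-degree cover vertices are dangerous.

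That degree-based mechanism is what the paper actually uses, and its architecture is quite different from your plan. The core is Lemma \ref{le6}: take a counterexample maximizing $\sum_i|\mathcal{F}_i|$ (so that for each non-complete family the remaining families \emph{do} have a rainbow matching), classify vertices by degree thresholds ($X_i$: degree $>2(l-1)\binom{n-2}{r-2}$; $Y_i$: degree $\ge r(k-1)\binom{n-2}{r-2}$), show via Hall's theorem together with the Huang--Loh--Sudakov rainbow-matching lemma (Lemma \ref{le3}) that $\{X_1,\dots,X_l\}$ has no system of distinct representatives, and then branch: either some family has small $X_i$ and $Y_i$, in which case a three-tier count (Claim 4, splitting on $A\subset Y_i$, $A\subset X_i\setminus Y_i$, $A\not\subset X_i$) pushes its clique numbers below the threshold, or all $X_i=Y_i$ coincide in a set of size $l-1$, which is then a common cover (Claim 5). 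The common-cover structure you hope to extract emerges only as one branch of this dichotomy, and only because of the degree/Hall analysis. Finally, the stated bound $n\ge 4k(t-r+2)(er)^{t-r+2}$ is reached by induction on $k$ with vertex deletion and the recurrence $K_s^r(\mathcal{F}^{(r)}_{n-1,k-2,1})+K_{s-1}^r(\mathcal{F}^{(r)}_{n-1,k-2,1})=K_s^r(\mathcal{F}^{(r)}_{n,k-1,1})$, with Lemma \ref{le6} as the base case. In short, your proposal is a plan whose central step is missing and whose intended route does not go through; the paper closes that step with tools (maximality of $\sum_i|\mathcal{F}_i|$, degree thresholds, Hall, Lemma \ref{le3}) that are absent from your sketch.
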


To prove the above theorem, we need some estimates on the binomial coefficients, which are listed below. Let $a, b$ and $c$ be integers satisfying $a\geq b\geq c\geq 0$. Then the following inequalities hold:
\begin{align}
{a\choose b}&\leq \left(\frac{ea}{b}\right)^b,\label{eq1}\\[5pt]
{b\choose c}&\leq \left(\frac{b}{a}\right)^c{a\choose c},\label{eq2}\\[5pt]
{a\choose c}&\leq\left(\frac{a-c}{b-c}\right)^c{b\choose c},\label{eq3}\\[5pt]
{a\choose c}&\leq\left(\frac{ea}{b}\right)^c{b\choose c}.\label{eq4}
\end{align}
When $b$ is close to $c$, the inequality \eqref{eq4} gives a better upper bound on $\binom{a}{c}$ than the inequality \eqref{eq3}. Let $p$ be a positive integer and $x\in (0,\frac{1}{p}]$. Then we have
\begin{align}\label{eq5}
(1+x)^p\leq 1+p^2x.
\end{align}

By the definition of $\mathcal{F}^{(r)}_{n,k,1}$ we have
\begin{align*}
K_s^r(\mathcal{F}^{(r)}_{n,k,1})=\sum_{j=s-r+1}^s \binom{k}{j}\binom{n-k}{s-j}.
\end{align*}
It is easy to check that
\begin{align}\label{eq6}
K_s^r(\mathcal{F}^{(r)}_{n-1,k-1,1})+K_{s-1}^r(\mathcal{F}^{(r)}_{n-1,k-1,1})=K_s^r(\mathcal{F}^{(r)}_{n,k,1}).
\end{align}

In Section 2, we  prove
(I) of Theorem \ref{th1}. The proofs of
(II) and (III) of Theorem \ref{th1} will be given in Section 3. Theorem \ref{th3} will be
proved in Section 4.

\section{The maximum number of $s$-cliques with $s\leq k+r-1$}

In this section, we determine the maximum number of $s$-cliques in an $r$-graph $\hh$ with $\nu(\hh)\leq k$ when $s\leq k+r-1$. We need the following result
 due to Huang, Loh and Sudakov \cite{huang}.

\begin{lem}[Huang, Loh and Sudakov \cite{huang}]\label{le1}
Let $n,k,r$ be integers such that $rk\leq n$ and $\hh$ be an $r$-graph on $n$ vertices. If $\hh$ has $k$ distinct vertices $v_1, v_2, \ldots, v_k$ with   $\deg_\mathcal{H}(v_i)>2(k-1){n-2\choose r-2}$, then $\hh$ contains a matching of size $k$.
\end{lem}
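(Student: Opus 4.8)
The plan is to reduce the statement to finding a ``rainbow matching'' among the private links of the $k$ distinguished vertices, and then to control those links by a codegree bound together with a cross-intersecting estimate. For each $i$ set
\[
L_i=\Bigl\{T\in\binom{V\setminus\{v_1,\dots,v_k\}}{r-1}\colon \{v_i\}\cup T\in\mathcal H\Bigr\},
\]
the family of $(r-1)$-sets $T$, disjoint from all the $v_j$, for which $\{v_i\}\cup T$ is an edge. If I can choose pairwise disjoint $T_1\in L_1,\dots,T_k\in L_k$, then the edges $E_i=\{v_i\}\cup T_i$ are pairwise disjoint (the $v_i$ are distinct and lie in no $T_j$), so they form a matching of size $k$. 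Thus it suffices to produce such a disjoint transversal.

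First I would lower bound $|L_i|$. A neighbour $T'\in N_{\mathcal H}(v_i)$ excluded from $L_i$ must meet $\{v_1,\dots,v_k\}\setminus\{v_i\}$, and for each fixed $v_j$ the number of neighbours of $v_i$ through $v_j$ equals the codegree $\deg_{\mathcal H}(\{v_i,v_j\})\le\binom{n-2}{r-2}$. Summing over the $k-1$ choices of $j$ and using the hypothesis $\deg_{\mathcal H}(v_i)>2(k-1)\binom{n-2}{r-2}$ gives $|L_i|\ge \deg_{\mathcal H}(v_i)-(k-1)\binom{n-2}{r-2}>(k-1)\binom{n-2}{r-2}$. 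So each $L_i$ is an $(r-1)$-uniform family on the $(n-k)$-element set $V\setminus\{v_1,\dots,v_k\}$ of size strictly more than $(k-1)\binom{n-2}{r-2}$.

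The heart of the matter is to extract a disjoint transversal $T_1,\dots,T_k$ from families this large, and this is where the sharp constant $2$ is bought. A naive greedy selection fails: after choosing $T_1,\dots,T_{i-1}$ one forbids up to $(i-1)(r-1)$ vertices, and deleting the $(r-1)$-sets meeting them can erode a family by a factor of order $r$, so greedy only tolerates a threshold of order $r(k-1)\binom{n-2}{r-2}$. Instead I would argue in the spirit of cross-intersecting families. The model case $k=2$ is already instructive: here $L_1,L_2$ are $(r-1)$-uniform families on the common $(n-2)$-set $V\setminus\{v_1,v_2\}$ with $|L_1|,|L_2|>\binom{n-2}{r-2}$; if no disjoint pair $T_1\in L_1,T_2\in L_2$ existed, then $L_1$ and $L_2$ would be cross-intersecting, forcing $\min(|L_1|,|L_2|)\le\binom{n-3}{r-2}<\binom{n-2}{r-2}$ by the classical cross-intersecting estimate, a contradiction, so the disjoint pair exists. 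I expect the main obstacle to be promoting this two-family exchange to the full disjoint transversal while keeping the constant at $2(k-1)$ and without assuming $n$ large: the conclusion must hold already at $n=rk$, so averaging arguments (which only help for large $n$) are unavailable, and one is forced into a uniform extremal estimate for the links $L_1,\dots,L_k$.
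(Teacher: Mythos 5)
Your reduction is exactly the intended one---this lemma is quoted by the paper from \cite{huang} without proof, and the proof in Huang--Loh--Sudakov (mirrored inside this paper in Claim 3 of the proof of Lemma \ref{le6}) is precisely your first two steps: pass to the links $L_i$ of the $v_i$ restricted to $V\setminus\{v_1,\dots,v_k\}$, observe that each excluded neighbor of $v_i$ contains some $v_j$ so that at most $(k-1)\binom{n-2}{r-2}$ neighbors are lost, and conclude $|L_i|>(k-1)\binom{n-2}{r-2}\geq (k-1)\binom{(n-k)-1}{(r-1)-1}$. Since the $L_i$ are $(r-1)$-uniform families on $n-k\geq (r-1)k$ vertices, this is exactly the hypothesis of the rainbow matching lemma of Huang, Loh and Sudakov (Lemma \ref{le3}, applied with $r-1$ in place of $r$ and $n-k$ in place of $n$), which immediately yields the pairwise disjoint $T_1\in L_1,\dots,T_k\in L_k$ and finishes the proof. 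This also explains the constant $2$: half the degree hypothesis is spent surviving the deletion of the other special vertices, and the other half is exactly the rainbow-matching threshold.

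The genuine gap is that you never prove the disjoint-transversal step for general $k$, and that step is the entire content of the lemma beyond the routine reduction. Your cross-intersecting argument is intrinsically a two-family statement: it settles $k=2$ (granting the classical bound $\min(|\mathcal{A}|,|\mathcal{B}|)\leq\binom{m-1}{s-1}$ for cross-intersecting $s$-uniform families on $m\geq 2s$ points), but iterating the exchange does not work any better than greedy, for the reason you yourself identify---once $T_1$ is fixed, the sets of $L_2,\dots,L_k$ meeting $T_1$ must be discarded, and that can cost up to $(r-1)\binom{n-k-1}{r-2}$ per step, blowing past the budget $(k-1)\binom{n-2}{r-2}$ after a few steps. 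So ``promoting the two-family exchange to the full transversal'' is not a technical refinement you can defer; it requires the separate induction that proves Lemma \ref{le3} (a greedy argument only reaches the weaker threshold $r(k-1)\binom{n-2}{r-2}$, which is what the paper's Lemma \ref{le7} delivers). The clean repair is to invoke Lemma \ref{le3} as a black box at the point where you ask for $T_1,\dots,T_k$; as written, your proof is complete only for $k\leq 2$.
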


Theorem \ref{th1} (I) will be proved by   induction. The following lemma is
the basis of the induction.

\begin{lem}\label{lea1}
Let $r,s$ be positive integers such that $r\leq s$ and $\hh$ be an $r$-graph on $n$ vertices with $\nu(\hh)\leq s-r+1$. For $n\geq 4(s-r+1)(er)^{s-r+2}$, we have $K_s^r(\hh)\leq K_s^r(\mathcal{F}^{(r)}_{n,s-r+1,1})$.
\end{lem}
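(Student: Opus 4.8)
The plan is to prove the lemma by induction on $n$, peeling off a single vertex that lies in very few $s$-cliques and closing the step with a Pascal-type identity. Throughout write $k=s-r+1$, so the target reads $K_s^r(\hh)\leq K_s^r(\mathcal{F}^{(r)}_{n,k,1})$. The first thing I would record is the exact shape of the extremal count: since $s=k+r-1$, any $s$-clique of $\mathcal{F}^{(r)}_{n,k,1}$ has at most $r-1$ vertices outside $[k]$ (otherwise it contains an $r$-set disjoint from $[k]$, which is not an edge), hence it must consist of all of $[k]$ together with $r-1$ further vertices; therefore $K_s^r(\mathcal{F}^{(r)}_{n,k,1})=\binom{n-k}{r-1}$. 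This tells me precisely what to aim for at each step.

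The inductive step runs as follows. Suppose I can find a vertex $u$ that is \emph{light}, meaning $K_s^r(u,\hh)\leq\binom{n-1-k}{r-2}$. Since $\nu(\hh-u)\leq\nu(\hh)\leq k$ and $\hh-u$ has $n-1$ vertices, the induction hypothesis gives $K_s^r(\hh-u)\leq\binom{n-1-k}{r-1}$, and then
\[
K_s^r(\hh)=K_s^r(u,\hh)+K_s^r(\hh-u)\leq\binom{n-1-k}{r-2}+\binom{n-1-k}{r-1}=\binom{n-k}{r-1},
\]
the last equality being Pascal's rule (the same identity underlying \eqref{eq6}). The base case $k=1$ is the trivial anchor: there $\nu(\hh)\leq 1$ means $\hh$ is intersecting, so $K_r^r(\hh)=|\hh|\leq\binom{n-1}{r-1}$, and a direct verification is needed for $n$ at the threshold $4(s-r+1)(er)^{s-r+2}$ so that the descent in $n$ never violates the hypothesis on $n$.

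Everything thus reduces to guaranteeing the existence of a light vertex. My route is a dichotomy: I will show that at most $k$ vertices can be \emph{heavy} (i.e.\ fail to be light), so that for $n>k$ a light vertex always exists. To bound the number of heavy vertices I would invoke Lemma \ref{le1}: the aim is to show that every heavy vertex $v$ satisfies $\deg_\hh(v)>2k\binom{n-2}{r-2}$, for then the existence of $k+1$ heavy vertices would force a matching of size $k+1$ by Lemma \ref{le1} (applied with parameter $k+1$), contradicting $\nu(\hh)\leq k$. The natural tool for the contrapositive is the link bound $K_s^r(v,\hh)\leq K_{s-1}^{r-1}\bigl(N_\hh(v)\bigr)$ together with the estimates \eqref{eq1} and \eqref{eq4} to control the number of $(s-1)$-cliques of the $(r-1)$-uniform link from its number of edges $\deg_\hh(v)$.

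The main obstacle is precisely this last estimate. A vertex of small degree can, a priori, lie in many $s$-cliques if its link is clique-rich, so bounding $K_s^r(v,\hh)$ by the link count alone is too weak by powers of $n$. The point I would have to exploit is that each $s$-clique through $v$ restricts simultaneously to an $(s-1)$-clique of the link \emph{and} to an $(s-1)$-clique of $\hh-v$, and the latter carries the global constraint $\nu(\hh-v)\leq k$; a link that is a single large clique contributes nothing unless its vertices also span $\hh$-cliques, which in turn inflates other degrees. Making this coupling quantitative — proving that $\deg_\hh(v)\leq 2k\binom{n-2}{r-2}$ forces $K_s^r(v,\hh)\leq\binom{n-1-k}{r-2}$ — is where I expect the exact threshold $n\geq 4(s-r+1)(er)^{s-r+2}$ to enter, through repeated use of \eqref{eq1} and \eqref{eq4} to push the link clique-count below $\binom{n-1-k}{r-2}$. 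I would isolate this combined link/matching estimate as the technical heart of the proof, after which the induction and the dichotomy above assemble routinely.
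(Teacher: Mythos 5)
Your scaffolding is fine as far as it goes (the identity $K_s^r(\mathcal{F}^{(r)}_{n,k,1})=\binom{n-k}{r-1}$ for $s=k+r-1$, the Pascal step, and $\nu(\hh-u)\le\nu(\hh)$), but the two load-bearing steps of the plan are not proved, and the first of them is false as stated. The bridge you defer as ``the technical heart'' --- that $\deg_{\hh}(v)\le 2k\binom{n-2}{r-2}$ forces $K_s^r(v,\hh)\le\binom{n-1-k}{r-2}$ --- already fails for $r=2$: let $\hh$ be a complete graph on $2k+1$ vertices together with $n-2k-1$ isolated vertices. Then $\nu(\hh)=k$, every clique vertex has degree exactly $2k=2k\binom{n-2}{0}$, yet it lies in $\binom{2k}{k}>1=\binom{n-1-k}{0}$ cliques of size $s=k+1$. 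So there are $2k+1>k$ heavy vertices, none of degree exceeding your threshold: both the implication ``heavy $\Rightarrow$ high degree'' and the dichotomy ``at most $k$ heavy vertices'' collapse, and Lemma \ref{le1} cannot produce a contradiction (none exists, since $\nu(\hh)=k$). Nor can a light vertex be extracted by averaging: even granting the lemma, $\sum_v K_s^r(v,\hh)=s\,K_s^r(\hh)$ only bounds the average clique-count per vertex by roughly $\tfrac{s}{r-1}\binom{n-1-k}{r-2}$, which exceeds your lightness threshold by the factor $\tfrac{k+r-1}{r-1}$. So the existence of a light vertex is not routine; with these thresholds it is essentially the content of the lemma, and no argument for it is supplied.

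The second gap is the base case. Your induction is on $n$, so its base is $n_0=\lceil 4(s-r+1)(er)^{s-r+2}\rceil$, not ``$k=1$'' (here $k=s-r+1$ is determined by $s$ and $r$; the case $k=1$ is just the separate parameter case $s=r$). At $n=n_0$ no induction hypothesis is available, and ``direct verification'' is impossible in any trivial sense: by the paper's discussion after Theorem \ref{th1}, for all $n\le n^*(k,r,s)$ (a quantity growing like $r^{k/(r-1)}$) the $r$-graph $\mathcal{F}^{(r)}_{n,k,r}$ has matching number at most $k$ and strictly more $s$-cliques than $\mathcal{F}^{(r)}_{n,k,1}$, so the conclusion is genuinely false below the threshold and the base case carries the full difficulty of the lemma. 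This is exactly where the paper's proof does its work, and it does so without any per-vertex statement: it fixes a maximum matching with vertex set $S$, $|S|\le r(s-r+1)$, uses Lemma \ref{le1} to get $|X|\le s-r+1$ for the set $X$ (resp.\ $Y\subseteq X$) of vertices of degree above $2(s-r+1)\binom{n-2}{r-2}$ (resp.\ $r(s-r+1)\binom{n-2}{r-2}$); if $|Y|=s-r+1$ every edge meets $Y$ and $\hh$ embeds into $\mathcal{F}^{(r)}_{n,s-r+1,1}$, while if $|Y|\le s-r$ it bounds \emph{all} $s$-cliques at once by first choosing the $(s-r+1)$ vertices $A\subseteq S$ and then bounding the common neighborhood of $A$ by the degree of a vertex of $A$ outside $Y$ (or outside $X$), yielding $K_s^r(\hh)\le 3(s-r+1)(er)^{s-r+1}\binom{n-2}{r-2}\le\binom{n-s+r-1}{r-1}$. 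Your induction-plus-Pascal skeleton is not wasted --- it is essentially how the paper deduces Theorem \ref{th1}(I) from this lemma, by removing a vertex whose deletion lowers the matching number --- but for the lemma itself, which serves as the base of that induction, a direct global argument of this kind is unavoidable.
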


\begin{proof}
 Let $\hm=\{E_1,E_2,\ldots,E_p\}$ be a maximum matching in $\hh$ and $S$ be the set of vertices that are covered by $\hm$. Since $\nu(\hh)\leq s-r+1$, we have $p\leq s-r+1$.
Let
\begin{align*}
&X=\left\{x\in S\colon \deg_{\hh}(x) > 2(s-r+1){n-2\choose r-2}\right\},\\[5pt]
&Y=\left\{x\in S\colon \deg_{\hh}(x) > r(s-r+1){n-2\choose r-2}\right\}.
\end{align*}
Clearly, $Y\subset X$. By Lemma \ref{le1}, we have $|X|\leq s-r+1$. Thus, $|Y|\leq s-r+1$. Now the proof splits into two cases depending on the size of $Y$.

Case 1. $|Y|=s-r+1$. We claim that every edge of $\hh$ contains at least one vertex in $Y$. Otherwise, assume that $E$ is an edge of $\hh$ that is disjoint from $Y$.
Let $Y=\{x_1, x_2, \ldots, x_{s-r+1}\}$.  {For each $i=1,2,\ldots,s-r+1$, since there are at most $r{n-2\choose r-2}$ sets in $N_{\hh}(x_i)$ that intersects $E$, it follows that}
 \begin{align*}
 \deg_{\hh- E}(x_i) &> r(s-r+1){n-2\choose r-2}-r{n-2\choose r-2}\\[5pt]
 &\geq 2(s-r){n-r-2\choose r-2}.
 \end{align*}
 By Lemma \ref{le1}, $\hh- E$ contains a matching of size $s-r+1$. Then, $\hh$ contains a matching of size $s-r+2$, {which contradicts the fact that $\nu(\hh)\leq s-r+1$.} Thus, the claim holds.
Therefore, $\hh$ is isomorphic to a subhypergraph of $\mathcal{F}^{(r)}_{n,s-r+1,1}$, and we have $K_s^r(\hh)\leq K_s^r(\mathcal{F}^{(r)}_{n,s-r+1,1})$.

Case 2. $|Y|\leq s-r$.  Clearly, each $s$-clique in $\hh$ contains at least $s-r+1$ vertices in $S$. Otherwise, we obtain a matching of size $p+1$ in $\hh$, which contradicts the fact that $\hm$ is a maximum matching in $\hh$. Now we count the number of $s$-cliques in $\hh$ as follows.
First, we choose a set $A$ of $(s-r+1)$ vertices in $S$ and there are at most ${|S|\choose s-r+1}$ {choices} for $A$. Then choose an $(r-1)$-element subset $B$ of $V(\hh)$, which may form an $s$-clique in $\hh$ together with $A$. Consequently, $B$ has to be a common neighbor of all the vertices in $A$.
Since $|A|>|Y|$, there exists some $x\in A$ that falls in $S\setminus Y$. If $A\subset X$, the number of choices for $B$ is at most $(s-r+1)r{n-2\choose r-2}$. If $A$ is not contained in $X$, the number of choices for $B$ is at most $2(s-r+1){n-2\choose r-2}$.
 Thus we have
\[
K_s^r(\hh)\leq (s-r+1)r{n-2\choose r-2}{|X|\choose s-r+1}+2(s-r+1){n-2\choose r-2}{|S|\choose s-r+1}.
\]
Since $|X|\leq s-r+1$ and $|S|=pr\leq r(s-r+1)$, we find that
\begin{align}\label{eqs5}
K_s^r\leq (s-r+1)r{n-2\choose r-2}+2(s-r+1){n-2\choose r-2}{(s-r+1)r\choose s-r+1}.
\end{align}
Using the inequality \eqref{eq1}, we get
\begin{align}\label{eqs6}
{(s-r+1)r\choose s-r+1} \leq (er)^{s-r+1}.
\end{align}
Combining \eqref{eqs5} and \eqref{eqs6}, we obtain that
\begin{align}\label{eqs7}
K_s^r(\hh)\leq3(s-r+1)(er)^{s-r+1}{n-2\choose r-2}.
\end{align}
By the inequality \eqref{eq3}, we have
\[
{n-2\choose r-2}\leq\left(\frac{n-r}{n-s}\right)^{r-2}{n-s+r-2\choose r-2}.
\]
Therefore, \eqref{eqs7} implies that
\begin{align}\label{eqs8}
K_s^r(\hh)\leq3(s-r+1)(er)^{s-r+1}\left(\frac{n-r}{n-s}\right)^{r-2}{n-s+r-2\choose r-2}.
\end{align}
Applying \eqref{eq5} gives
\begin{align}\label{eqs9}
\left(\frac{n-r}{n-s}\right)^{r-2} =\left(1+\frac{s-r}{n-s}\right)^{r-2} \leq 1+\frac{(r-2)^2(s-r)}{n-s}.
\end{align}
It follows from \eqref{eqs8} and \eqref{eqs9} that
\begin{align}\label{eqs10}
K_s^r(\hh)\leq3(s-r+1)(er)^{s-r+1}\left(1+\frac{(r-2)^2(s-r)}{n-s}\right){n-s+r-2\choose r-2}.
\end{align}
Since $n\geq 4(s-r+1)(er)^{s-r+2}$, it is easily checked that
\begin{align}\label{eqs11}
\frac{(r-2)^2(s-r)}{n-s} \leq \frac{1}{3}
\end{align}
and
\begin{align}\label{eqs12}
4(s-r+1)(er)^{s-r+1}\leq\frac{n-s+r-1}{r-1}.
\end{align}
Combining \eqref{eqs10}, \eqref{eqs11} and \eqref{eqs12}, we deduce that
\begin{align*}
K_s^r(\hh)\leq\frac{n-s+r-1}{r-1}{n-s+r-2\choose r-2}={n-s+r-1\choose r-1}=K_s^r(\mathcal{F}^{(r)}_{n,s-r+1,1}).
\end{align*}
This completes the proof.
\end{proof}

Now we are ready to prove Theorem \ref{th1} (I).

\begin{proof}[Proof of Theorem \ref{th1} (I)]
Let $n, r$ be positive integers. We shall proceed by double induction on $s$ and $k$. Recall the condition $r\leq s\leq k+r-1$. For $s=r$ and all $k\geq 1$, the result follows from Theorem \ref{matchinglem}. For all $s\geq r$ and $k=s-r+1$, the result follows from Lemma \ref{lea1}.  Now we assume that the assertion holds for $(s-1,k-1)$ and $(s,k-1)$.
Let $\hh$ be an $r$-graph on $n$ vertices with $n\geq 4k(er)^{s-r+2}$. Without loss of generality, we assume that $\nu(\hh)=k$. Let $\hm=\{E_1,E_2,\ldots,E_k\}$ be a maximum matching in $\hh$ and $S$ be the set of vertices covered by $\hm$.

If there exists a vertex $u\in V(\hh)$ such that $\nu(\hh-u)=k-1$, then by the induction hypothesis we have $K_s^r(\hh-u)\leq K_s^r(\mathcal{F}^{(r)}_{n-1, k-1,1})$. Again, by the induction hypothesis, we have
$$K_s^r(u,\hh) \leq K_{s-1}^r(\hh-u)\leq K_{s-1}^r(\mathcal{F}^{(r)}_{n-1, k-1,1}).$$
From the equality (\ref{eq6}), it follows that
\begin{align*}
K_s^r(\hh)&=K_s^r(\hh-u)+K_s^r(u,\hh)\\[5pt]
&\leq K_s^r(\mathcal{F}^{(r)}_{n-1, k-1,1})+K_{s-1}^r(\mathcal{F}^{(r)}_{n-1, k-1,1})\\[5pt]
&=K_{s}^r(\mathcal{F}^{(r)}_{n, k,1}).
\end{align*}
Thus, we have shown that Theorem \ref{th1} (I) holds if there exists a vertex $u\in V(\hh)$ such that $\nu(\hh-u)=k-1$.

Now we consider the case that $\nu(\hh-u)=k$ for any $u\in V(\hh)$. We claim that the maximum degree in $\hh$ is at most $rk{n-2\choose r-2}$.  Let $u\in V(\hh)$ and $\hm'$ be a matching of size $k$ in $\hh-u$. For each edge $F$ in $\hh$ with $u\in F$,  it is easy to see that $|F\cap(\cup_{E\in \hm'}E)| \geq 1$. It follows that the maximum degree of $\hh$ is at most $rk{n-2\choose r-2}$.

Let $Y$ be the set of vertices in $S$ with degree greater than $2k{n-2\choose r-2}$. If $|Y|\geq k+1$,  by Lemma \ref{le1} we obtain a matching of size $k+1$ in $\hh$, contradicting the assumption that $k$ is the size of a maximum matching. Thus we may assume that $|Y|\leq k$.
Note that every $s$-clique in $\hh$ contains at least $s-r+1$ vertices in $S$. We  proceed to derive an upper bound on the number of $s$-cliques in $\hh$. First, we choose a set $A$ of $(s-r+1)$ vertices in $S$.  There are at most ${|S|\choose s-r+1}$ {choices} for $A$. Then choose an $(r-1)$-element subset $B$ of $V(\hh)$ such that $\hh[B\cup A]$ is an $s$-clique of $\hh$. It  can be seen that $B$ is a common neighbor of  the vertices in $A$, that is, $B\in N_{\hh}(v)$ for any $v\in A$. If $A\subset Y$, then the number of choices for $B$ is at most $rk{n-2\choose r-2}$. If there exists a vertex $x\in A\setminus Y$, then the number of choices for $B$ is at most $2k{n-2\choose r-2}$. Hence
\[
K_s^r(\hh)\leq kr{n-2\choose r-2}{|Y|\choose s-r+1}+2k{n-2\choose r-2}{|S|\choose s-r+1}.
\]
Since $|Y|\leq k$ and $|S|=kr$, we find that
\begin{align}\label{eqs13}
K_s^r(\hh)\leq kr{n-2\choose r-2}{k\choose s-r+1}+2k{n-2\choose r-2}{rk\choose s-r+1}.
\end{align}
Using the inequality \eqref{eq4}, we see that
\begin{align}\label{eqs14}
{rk\choose s-r+1} \leq (er)^{s-r+1}{k\choose s-r+1}.
\end{align}
Combining \eqref{eqs13} and \eqref{eqs14}, we obtain that
\begin{align}\label{eqs15}
K_s^r(\hh)&\leq kr{k\choose s-r+1}{n-2\choose r-2}+2k(er)^{s-r+1}{k\choose s-r+1}{n-2\choose r-2}\nonumber\\[5pt]
&\leq \left(2k \left(er\right)^{s-r+1}+rk\right){k\choose s-r+1}{n-2\choose r-2}\nonumber\\[5pt]
&\leq 3k \left(er\right)^{s-r+1}{k\choose s-r+1}{n-2\choose r-2}.
\end{align}
Employing \eqref{eq3} and \eqref{eq5}, we find that
\begin{align}\label{eqs16}
{n-2\choose r-2}&\leq \left(\frac{n-2-(r-2)}{(n-k-1)-(r-2)}\right)^{r-2}{n-k-1\choose r-2}\nonumber\\[5pt]
&=\left(1+\frac{k-1}{n-k-r+1}\right)^{r-2}{n-k-1\choose r-2}\nonumber\\[5pt]
&\leq \left(1+\frac{(r-2)^2(k-1)}{n-k-r+1}\right){n-k-1\choose r-2}.
\end{align}
It follows from \eqref{eqs15} and \eqref{eqs16} that
\begin{align}\label{eqs17}
K_s^r(\hh)\leq 3(er)^{s-r+1}k {k\choose s-r+1} \left(1+\frac{(r-2)^2(k-1)}{n-k-r+1}\right){n-k-1\choose r-2}.
\end{align}
Since $n\geq 4(er)^{s-r+2}k$, we see that
\begin{align}\label{eqs18}
\frac{(r-2)^2(k-1)}{n-k-r+1}\leq \frac{1}{3}
\end{align}
and
\begin{align}\label{eqs19}
4(er)^{s-r+1}k\cdot \frac{r-1}{n-k} \leq 1.
\end{align}
In view of \eqref{eqs17}, \eqref{eqs18} and \eqref{eqs19}, we arrive at
\begin{align*}
K_s^r(\hh) \leq{k\choose s-r+1}{n-k\choose r-1} \leq K_s^{r}(\mathcal{F}^{(r)}_{n,k,1}).
\end{align*}
This completes the proof.
\end{proof}

\section{The maximum number of $s$-cliques with $s\geq k+r$}

 In this section, we prove parts (II) and (III) of Theorem \ref{th1} by utilizing the shifting method originally due to Erd\H{o}s-Ko-Rado \cite{ekr} and further developed by Frankl \cite{fra-shift}.

Let $\hh$ be an $r$-graph on the vertex set $[n]$. For integers $i,j$ with $1\leq i< j\leq n$ and any $E\in \hh$, the shifting operator $S_{ij}$ is defined by
 \begin{align*}
 S_{ij}(E)=\left\{
             \begin{array}{ll}
               (E\setminus\{j\})\cup\{i\}, & \hbox{if } j\in E, i\notin E \hbox{ and } (E\setminus\{j\})\cup\{i\}\notin \hh; \\[5pt]
               E, & \hbox{otherwise.}
             \end{array}
           \right.
 \end{align*}
Set
\[
S_{ij}(\hh)=\{S_{ij}(E): E\in H\}.
\]
An $r$-graph $\hh$ is called a stable $r$-graph if $S_{ij}(\hh)=\hh$ holds for all $1\leq i<j\leq n$, see Frankl \cite{fra-shift}. He showed that any $r$-graph $\hh$ can be shifted to a stable $r$-graph by applying the shifting operator iteratively.

We aim to determine the maximum number of  $s$-cliques in an $r$-graph $\hh$ with matching number at most $k$. Frankl \cite{fra-shift} proved that $\nu(S_{ij}(\hh))\leq \nu(\hh)$. Thus, the shifting operator preserves the property that the matching number is at most $k$. We shall show that the shifting operator does not decrease the number of $s$-cliques in an $r$-graph.

\begin{lem}\label{le4}
Let $\hh$ be an $r$-graph on the vertex set $[n]$. For any $i,j\in[n]$ with $i< j$ and $s\geq r$, we have $K_s^r(S_{ij}(\hh))\geq K_s^r(\hh)$. Moreover, if each edge of $\hh$ is contained in an $s$-clique of $\hh$, then each edge of $S_{ij}(\hh)$ is contained in an $s$-clique of $S_{ij}(\hh)$.
\end{lem}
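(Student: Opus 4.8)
The plan is to analyze how the shifting operator $S_{ij}$ transforms $s$-cliques, establishing an injection from the $s$-cliques of $\hh$ into those of $S_{ij}(\hh)$. Write $\hh'=S_{ij}(\hh)$ for brevity. The key observation is that shifting acts on each edge only by the local swap $j\mapsto i$, so it induces a natural candidate map on vertex sets: for an $s$-clique on vertex set $C\subseteq[n]$, consider the set $\sigma(C)=(C\setminus\{j\})\cup\{i\}$ if $j\in C, i\notin C$, and $\sigma(C)=C$ otherwise. First I would partition the $s$-cliques of $\hh$ into three types according to their intersection with $\{i,j\}$: those containing neither $i$ nor $j$, those containing $i$ (regardless of $j$), and those containing $j$ but not $i$. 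Cliques of the first two types are the ``easy'' cases, and cliques of the third type are where shifting may genuinely move things.

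The main step is to show the map $C\mapsto\sigma(C)$ sends $s$-cliques of $\hh$ to $s$-cliques of $\hh'$ and is injective. For a clique $C$ with $j\in C, i\notin C$, I would verify that $\sigma(C)=(C\setminus\{j\})\cup\{i\}$ is an $s$-clique of $\hh'$; this requires checking that every $r$-subset of $\sigma(C)$ lies in $\hh'$. An $r$-subset $F'$ of $\sigma(C)$ either avoids $i$, in which case $F'\subseteq C\setminus\{j\}$ is an edge of $\hh$ that survives shifting (shifting only deletes edges of the form $(E\setminus\{j\})\cup\{i\}$ in the ``otherwise'' branch, and such $F'$ not containing $j$ is not removed), or $F'$ contains $i$, say $F'=(F\setminus\{j\})\cup\{i\}$ for the corresponding $r$-subset $F$ of $C$ containing $j$. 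For the latter I would argue by cases on whether $F'$ was already in $\hh$: if so it remains in $\hh'$; if not, then since $F\in\hh$ (as $F$ is an $r$-subset of the clique $C$) and $F'\notin\hh$, the shifting operator sends $F$ to $F'$, so $F'\in\hh'$. This is the crux: the clique structure guarantees every relevant $r$-subset $F$ of $C$ is an edge of $\hh$, which is precisely what makes $S_{ij}$ carry it to $F'$ whenever $F'$ was missing. For injectivity, note that $\sigma$ only ever replaces $j$ by $i$, so if $\sigma(C_1)=\sigma(C_2)$ one recovers $C_1,C_2$ from the image together with the type information; the only possible collision would be between a type-2 clique already containing $i$ and the image of a type-3 clique, but a type-3 image contains $i$ and not $j$ while being the image of a set containing $j$, and I would check these ranges stay disjoint because a fixed type-2 clique containing both $i$ and $j$ is not in the image of $\sigma$ restricted to type-3 sets.

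The hard part will be the careful bookkeeping in the collision analysis for injectivity, specifically handling cliques $C$ with $i\in C$ and $j\in C$ simultaneously, and confirming that no two distinct source cliques map to the same target. I expect the cleanest route is to observe that $\sigma$ is the identity except it relabels $j$ to $i$ on sets missing $i$, and to track the preimage explicitly: given a target clique $D$ of $\hh'$, its preimage under $\sigma$ is determined by whether $i\in D$, $j\in D$, and whether $D$ or $(D\setminus\{i\})\cup\{j\}$ is a clique of $\hh$. Establishing $K_s^r(\hh')\ge K_s^r(\hh)$ then follows since $\sigma$ is an injection from $\hk_s^r(\hh)$ into $\hk_s^r(\hh')$.

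For the ``moreover'' clause I would argue as follows. Suppose every edge of $\hh$ lies in an $s$-clique of $\hh$, and let $E'$ be any edge of $\hh'$; I want an $s$-clique of $\hh'$ containing $E'$. If $E'$ was already an edge of $\hh$, it lies in some $s$-clique $C$ of $\hh$, and I would check that $\sigma(C)$ is an $s$-clique of $\hh'$ containing $E'$ (using that $\sigma$ fixes $i$ and only removes $j$, so membership of $E'$ is preserved in the cases that arise). If instead $E'\notin\hh$, then $E'$ was newly created by the swap, so $E'=(E\setminus\{j\})\cup\{i\}$ with $E\in\hh$; take an $s$-clique $C$ of $\hh$ containing $E$, and the same analysis shows $\sigma(C)$ is an $s$-clique of $\hh'$ which, by construction, contains $E'$. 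In both cases the image of a witnessing clique under $\sigma$ provides the required $s$-clique in $\hh'$, completing the proof.
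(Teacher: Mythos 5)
There is a genuine gap in your injectivity argument, and it propagates to the ``moreover'' clause. Your map $\sigma$ shifts \emph{every} type-3 clique (those with $j\in C$, $i\notin C$), whereas injectivity requires shifting only when forced. The collision you must rule out is not with a type-2 clique containing both $i$ and $j$ (those are irrelevant, since images of type-3 cliques never contain $j$), but with a type-2a clique containing $i$ and not $j$ --- and that collision can genuinely occur. Concrete counterexample: $r=2$, $s=3$, $n=4$, $i=1$, $j=4$, and $\hh=\{\{1,2\},\{1,3\},\{2,3\},\{2,4\},\{3,4\}\}$. Here $S_{14}(\hh)=\hh$, because $\{1,2\},\{1,3\}\in\hh$ block the shifts of $\{2,4\},\{3,4\}$. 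The $3$-cliques of $\hh$ are $\{1,2,3\}$ and $\{2,3,4\}$, and your $\sigma$ sends \emph{both} to $\{1,2,3\}$, so it is not injective (it would only certify $K_3^2(S_{14}(\hh))\geq 1$ while the true count is $2$). The same example breaks your ``moreover'' argument: the edge $E'=\{2,4\}$ lies in $\hh\cap S_{14}(\hh)$, its only witnessing clique in $\hh$ is $C=\{2,3,4\}$, and $\sigma(C)=\{1,2,3\}$ does not contain $E'$, contrary to your parenthetical claim that membership of $E'$ is preserved ``in the cases that arise.''

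The fix is exactly the paper's definition of the map: send $K$ to itself whenever $S_{ij}(\hh)[K]$ is still an $s$-clique, and send $K$ to $(K\setminus\{j\})\cup\{i\}$ \emph{only} when it is not. In that forced case, some $r$-subset $F\subset K$ with $j\in F$, $i\notin F$ was actually shifted, which means $(F\setminus\{j\})\cup\{i\}\notin\hh$; hence $\hh[(K\setminus\{j\})\cup\{i\}]$ is \emph{not} an $s$-clique of $\hh$, so the shifted image can never coincide with the image of a clique fixed by the map --- this is precisely the observation that kills the type-2a collision, and it is absent from your proposal. Your verification that the swapped vertex set is a clique of $S_{ij}(\hh)$ (using that every relevant $r$-subset of $K$ is an edge of $\hh$, so the shift either fixes it or carries it onto the needed edge) is correct and is the same crux as in the paper; with the conditional definition of $\sigma$, that computation plus the observation above completes both parts of the lemma.
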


\begin{proof}
Let $K\subset[n]$ with $|K|=s$. If $\hh[K]$ is an $s$-clique but $S_{ij}(\hh)[K]$ is not an $s$-clique, then we have $j\in K$ and $i\notin K$ and there is an edge in $\hh[K]$ that is shifted by $S_{ij}$.   By the definition of the shifting operator, it follows that $\hh[(K-\{j\})\cup \{i\}]$ is not an $s$-clique but $S_{ij}(\hh)[(K\setminus\{j\})\cup \{i\}]$ is an $s$-clique. Now, we define a map $\sigma$ from $\mathcal{K}_s^r(\hh)$ to $\mathcal{K}_s^r(S_{ij}(\hh))$ as follows. If $\hh[K]\in \mathcal{K}_s^r(\hh)$ and $S_{ij}(\hh)[K]\in \mathcal{K}_s^r(S_{ij}(\hh))$, let $\sigma(\hh[K]) = S_{ij}(\hh)[K]$; If $\hh[K]\in \mathcal{K}_s^r(\hh)$ but $S_{ij}(\hh)[K]\notin \mathcal{K}_s^r(S_{ij}(\hh))$, let $\sigma(\hh[K]) = S_{ij}(\hh)[(K-\{j\})\cup \{i\}]$.  It is easy to verify that $\sigma$ is an injection, and so $K_s^r(S_{ij}(\hh))\geq K_s^r(\hh)$.

Suppose that each edge of $\hh$ is contained in an $s$-clique of $\hh$ and there exists an edge $E\in S_{ij}(\hh)$ that is not contained in any $s$-clique of $S_{ij}(\hh)$.
 If $E\in \hh$, let $\hh[K]$ be an $s$-clique of $\hh$ containing $E$, where $K$ is a subset of $[n]$ with $|K|=s$.
 Since $E\in S_{ij}(\hh)$, we have $S_{ij}(\hh)[(K\setminus\{j\})\cup \{i\}]$ is an $s$-clique containing $E$, a contradiction.
 If $E\notin \hh$, then $E'=(E\setminus\{i\})\cup \{j\}$ is an edge of $\hh$.
 Let $K$ be a subset of $[n]$ such that $\hh[K]$ is an $s$-clique in $\hh$ containing $E'$.
 Clearly, we have $j\in E'$ and $i\notin K$.
 Then $S_{ij}(\hh)[(K\setminus\{j\})\cup \{i\}]$ is an $s$-clique in $S_{ij}(\hh)$ containing $E$, a contradiction.
 Thus, if each edge of $\hh$ is contained in an $s$-clique of $\hh$, then each edge of $S_{ij}(\hh)$ is contained in an $s$-clique of $S_{ij}(\hh)$. This completes the proof.
\end{proof}

Let us recall a basic property of the shifting operator, which will be used later.
Let $E_1=\{a_1,a_2,\ldots,a_r\}$ and $E_2=\{b_1,b_2,\ldots,b_r\}$ be two different $r$-element subsets of $[n]$.  As used in \cite{frankl4}, we write $E_1\prec E_2$ if there exists a permutation $\sigma_1\sigma_2\cdots\sigma_r$ of $[r]$ such that $a_j\leq b_{\sigma_j}$ for all $j=1,\ldots,r$. Frankl \cite{fra-shift} showed that if $\hh$ is a stable $r$-graph on $[n]$, $E\in \hh$ and $S$ is an $r$-element subset of $[n]$ with $S\prec E$, then $S\in \hh$.

The following proposition gives a characterization of  stable $r$-graphs with matching number at most $k$.


\begin{prop}\label{prop1}
Let $n, k, r, s$ be positive integers with $k+r\leq s\leq rk+r-1$ and $n\geq rk+r-1$. Let $\hh$ be a stable $r$-graph on the vertex set $[n]$ with $\nu(\hh)\leq k$. If every edge of $\hh$ is contained in at least one $s$-clique in $\hh$, then for every edge $E\in \hh$, we have $|E\cap[rk+a-1]|\geq a$, where $a=\lfloor \frac{s-r}{k} \rfloor+1$.
\end{prop}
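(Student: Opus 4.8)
The plan is to argue by contradiction. Suppose some edge $E=\{e_1<\dots<e_r\}$ has $|E\cap[rk+a-1]|\le a-1$; I will produce a matching of size $k+1$, contradicting $\nu(\hh)\le k$. The definition $a=\lfloor\frac{s-r}{k}\rfloor+1$ gives $2\le a\le r$ and, crucially, $(a-1)k\le s-r$. Since at most $a-1$ elements of $E$ lie in $[rk+a-1]$, the $a$-th smallest satisfies $e_a\ge rk+a$, hence $e_i\ge rk+i$ for all $i\ge a$; in particular $e_r\ge rk+r\le n$, so I may assume $n\ge rk+r$. Because $\hh$ is stable and $s_i\le e_i$ for all $i$ forces $S\prec E$ (identity permutation), I can shift $E$ down to the \emph{canonical bad edge} $E^{*}=\{1,\dots,a-1\}\cup\{rk+a,\dots,rk+r\}\in\hh$.

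The essential point is that the $\prec$-downset of $E^{*}$ alone is useless for matchings: every $r$-set $\prec E^{*}$ is forced to contain the common core $\{1,\dots,a-1\}$, so these edges form a sunflower. The clique hypothesis is exactly what unlocks many distinct cores, and I would exploit it as follows. Since $E^{*}$ lies in an $s$-clique $K^{*}$, I use the principle that for a stable $\hh$, if $C=\{z_1<\dots<z_s\}$ is a clique and $z_i'\le z_i$ for all $i$, then $\{z_1'<\dots<z_s'\}$ is again a clique (each $r$-subset is $\prec$ the matching subset of $C$). Applying this I claim the \emph{canonical clique} $\hat K=\{1,\dots,s-r+a-1\}\cup\{rk+a,\dots,rk+r\}$ is an $s$-clique in $\hh$: domination at the bottom positions is trivial since $z_i\ge i$, while domination at the top $r-a+1$ positions holds because $\{rk+a,\dots,rk+r\}\subseteq K^{*}$ forces the $l$-th largest element of $K^{*}$ to be at least $rk+r-(r-a+1)+l$. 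This top-position estimate is the one place needing genuine care.

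With $\hat K$ in hand I would build the matching. Take its edge $T_0=\{s-r+1,\dots,s-r+a-1\}\cup\{rk+a,\dots,rk+r\}\subseteq\hat K$; by stability every $r$-set $T=\{t_1<\dots<t_r\}$ with $t_i\le s-r+i$ for $i\le a-1$ and $t_i\le rk+i$ for $i\ge a$ is an edge. I then partition $[rk+r]$ into $k+1$ such edges: as cores take $C_j=\{(j-1)(a-1)+1,\dots,j(a-1)\}$ for $j=1,\dots,k+1$ (filling $\{1,\dots,(k+1)(a-1)\}$), and distribute the remaining $(k+1)(r-a+1)$ integers $\{(k+1)(a-1)+1,\dots,rk+r\}$ cyclically into tops $Y_1,\dots,Y_{k+1}$. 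Each $F_j=C_j\cup Y_j$ satisfies $F_j\prec T_0$: the core condition reduces to $(j-1)(a-1)\le s-r$, which holds since $(a-1)k\le s-r$; and the $l$-th smallest element of each $Y_j$ is at most $(k+1)(a-1+l)\le rk+a-1+l$, using $l\le r-a+1$, which is exactly the top condition. Hence $F_1,\dots,F_{k+1}\in\hh$ are pairwise disjoint, a matching of size $k+1$, giving the contradiction.

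The main obstacle, as flagged above, is conceptual rather than computational: the single bad edge must be amplified into a whole matching, and the shifting-downset of one edge cannot do this by itself, so the clique assumption is indispensable. The two delicate checks are (i) the top-position domination that produces the clean clique $\hat K$ from an arbitrary clique $K^{*}$, and (ii) the disjoint choice of cores and tops keeping every $F_j\prec T_0$. The numerology $(a-1)k\le s-r$ (for the cores) together with the exact count $|[rk+r]\setminus\bigcup_j C_j|=(k+1)(r-a+1)$ and the cyclic dealing (for the tops) is precisely what makes both constraints fit, and one should expect these identities to be tight.
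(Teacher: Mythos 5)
Your proposal is correct and takes essentially the same route as the paper: assume a bad edge, use the $s$-clique containing it to supply $k+1$ pairwise disjoint $(a-1)$-element cores (your inequality $(a-1)k\le s-r$ is exactly the paper's count $s-(r-a+1)\ge (a-1)(k+1)$), and use stability to complete those cores with small elements into $k+1$ disjoint edges, contradicting $\nu(\mathcal{H})\le k$. The only difference is bookkeeping: the paper picks the cores $S_1,\dots,S_{k+1}$ directly inside $V(K)\setminus X$ and the tops inside $[rk+a-1]$ (keeping one edge $S_{k+1}\cup X$ as is), whereas your normalization to the canonical edge $E^*$, canonical clique $\hat K$, and cyclic partition of $[rk+r]$ is a correct but more elaborate implementation of the same idea.
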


\begin{proof}
 It is easily checked that $2\leq a\leq r$ and $(a-1)k+r\leq s\leq ak+r-1$. Suppose that there is an edge $E=\{x_1,x_2,\ldots,x_r\}\in \hh$ with $x_1<x_2<\cdots<x_r$ such that $|E\cap[rk+a-1]|< a$. Then $|E\cap[rk+a-1]|< a$ implies that $x_a\geq rk+a$. Let $K$ be an $s$-clique in $\hh$ containing $E$ and $X=\{x_a,x_{a+1},\ldots,x_r\}$. Since
\[
|V(K)\setminus X|=s-(r-a+1)\geq (a-1)k+r-(r-a+1)= (a-1)(k+1),
\]
 there exist $k+1$ disjoint $(a-1)$-element sets $S_1, S_2, \ldots, S_{k+1}$ in $V(K)\setminus X$.
 Moreover, $S_i\cup X$ is an edge of $\hh$ for each $i=1,2,\ldots, k+1$.  For any
 \[
 T\subset [rk+a-1]\setminus(\cup_{i=1}^{k+1} S_i)
 \]
  with $|T|=r-a+1$, $S_i\cup T$ forms an edge of $\hh$ for each $i=1,2,\ldots, k+1$, since $\hh$ is stable and $S_i\cup T\prec S_i\cup X$.
 Noting that
 \[
 |[rk+a-1]\setminus(\cup_{i=1}^{k+1} S_i)|\geq rk+a-1 -(a-1)(k+1) = (r-a+1)k,
 \]
  there are $k$ disjoint $(r-a+1)$-element sets $T_1, T_2, \ldots, T_{k}$ in $[rk+a-1]\setminus(\cup_{i=1}^{k+1} S_i)$. Thus, $S_1\cup T_1$, $S_2\cup T_2$, \ldots, $S_k\cup T_k$, $S_{k+1}\cup X$ constitute $k+1$ disjoint edges in $\hh$, which contradicts the fact that $\nu(\hh)\leq k$. This completes the proof.
\end{proof}

Moreover, we need a result similar to Lemma \ref{le1}, which can be proved by the greedy {algorithm}.

\begin{lem}\label{le7}
Let $n,k,r,a$ be integers such that $rk\leq n$, $a<r$ and $\hh$ be an $r$-graph on $n$ vertices. If $V(\hh)$ has $k$ disjoint $a$-element subset $A_1, A_2, \ldots, A_k$ with  $\mathrm{deg}(A_i)> r(k-1){n-a-2\choose r-a-1}$, then $\hh$ contains a matching of size $k$.
\end{lem}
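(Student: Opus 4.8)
The plan is to build the matching greedily, selecting one edge $E_i\supseteq A_i$ for each $i$ in turn while reserving the still-uncovered sets $A_j$ so that they stay available. Since $|A_i|=a<r$, write a prospective edge through $A_i$ as $A_i\cup T_i$ with $T_i$ an $(r-a)$-element set satisfying $A_i\cup T_i\in\hh$; the candidates $T_i$ are exactly the members of $N_{\hh}(A_i)$, so there are $\deg(A_i)$ of them.

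I would process $i=1,2,\dots,k$, maintaining after step $i$ the invariant that sets $T_1,\dots,T_i$ with $T_l\in N_{\hh}(A_l)$ have been fixed so that the $k$ sets
\[
A_1\cup T_1,\ \dots,\ A_i\cup T_i,\ A_{i+1},\ \dots,\ A_k
\]
are pairwise disjoint. At step $i$ it suffices to choose $T_i\in N_{\hh}(A_i)$ with $T_i$ disjoint from
\[
F_i=\bigcup_{l<i}\bigl(A_l\cup T_l\bigr)\cup\bigcup_{l>i}A_l,
\]
because any $T_i\in N_{\hh}(A_i)$ is automatically disjoint from $A_i$. By the invariant the $i-1$ already-placed edges and the $k-i$ reserved sets are pairwise disjoint, so $|F_i|=(i-1)r+(k-i)a\le (k-1)r$. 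Reserving the future $A_j$ is precisely what lets the new edge avoid them, and this --- together with the pairwise disjointness of $A_1,\dots,A_k$ --- is the only place the hypothesis on the $A_i$ enters.

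The counting step is a routine union bound. A candidate $T\in N_{\hh}(A_i)$ is blocked only if it meets $F_i$, and each vertex of $F_i$ (which lies outside $A_i$) is contained in at most $\binom{n-a-1}{r-a-1}$ of the $(r-a)$-subsets of $[n]\setminus A_i$, hence in at most that many members of $N_{\hh}(A_i)$. Thus the number of blocked candidates is at most $|F_i|\binom{n-a-1}{r-a-1}\le r(k-1)\binom{n-a-1}{r-a-1}$, which the degree hypothesis $\deg(A_i)>r(k-1)\binom{n-a-2}{r-a-1}$ is calibrated to exceed; this leaves at least one admissible $T_i$, and choosing it restores the invariant. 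After step $k$ the sets $A_1\cup T_1,\dots,A_k\cup T_k$ are $k$ pairwise disjoint edges of $\hh$, i.e.\ a matching of size $k$.

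The main obstacle is the bookkeeping that keeps the not-yet-used sets $A_j$ disjoint from the growing matching: a naive greedy that only avoids the previously chosen edges can strand an $A_j$ whose vertices have been partially consumed, so one must reserve every remaining $A_j$ in advance, which is exactly what fixes the size of $F_i$ at $(k-1)r$ rather than something smaller. The one genuinely quantitative point is checking that this blocked count stays strictly below the stated threshold $r(k-1)\binom{n-a-2}{r-a-1}$; this reduces to a comparison of binomial coefficients and is where the bound $n\ge rk$ is used.
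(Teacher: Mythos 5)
Your greedy scheme is exactly the paper's proof of Lemma~\ref{le7}: process $i=1,\dots,k$, reserve the not-yet-used sets $A_j$, choose $T_i\in N_{\hh}(A_i)$ disjoint from $F_i=\bigcup_{l<i}(A_l\cup T_l)\cup\bigcup_{l>i}A_l$, bound $|F_i|\leq (i-1)r+(k-i)a\leq r(k-1)$, and control the blocked candidates by the vertex-by-vertex union bound, each vertex of $F_i$ lying in at most $\binom{n-a-1}{r-a-1}$ members of $N_{\hh}(A_i)$. All of that is correct and matches the paper step for step.

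The one step that fails is the final comparison, which is also the only quantitative step. Your union bound gives at most $r(k-1)\binom{n-a-1}{r-a-1}$ blocked candidates, and you assert this ``stays strictly below the stated threshold $r(k-1)\binom{n-a-2}{r-a-1}$'' via a binomial-coefficient comparison using $n\geq rk$. That comparison goes the wrong way: $\binom{n-a-1}{r-a-1}\geq\binom{n-a-2}{r-a-1}$ for every $n\geq r$, strictly whenever $a\leq r-2$, so the blocked-count bound is never below the stated threshold and no lower bound on $n$ can reverse this (only in the degenerate case $a=r-1$, where both binomials equal $1$, does your argument close). What your proof actually establishes is the lemma with the hypothesis $\deg(A_i)>r(k-1)\binom{n-a-1}{r-a-1}$. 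You should know, however, that this defect is inherited from the paper itself: its proof of Lemma~\ref{le7} opens by writing $|N_{\hh}(A_1)|>r(k-1)\binom{n-a-1}{r-a-1}$ as though that were the hypothesis, i.e.\ it silently proves the same $n-a-1$ version, and both places where the paper invokes Lemma~\ref{le7} (Claim~2 and Case~1 in the proof of Theorem~\ref{th1}~(II)) verify this stronger $n-a-1$ form. So the ``$n-a-2$'' in the statement is evidently a typo for ``$n-a-1$''; with that correction your proof is complete and identical in substance to the paper's. Proving the literal $n-a-2$ statement would require a genuinely sharper count of the blocked $(r-a)$-sets than the vertex union bound --- for instance the exact count $\binom{n-a}{r-a}-\binom{n-a-|F_i|}{r-a}$ --- not a rearrangement of binomial coefficients.
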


\begin{proof}
Using the greedy algorithm, we can find a matching of size $k$ in $\hh$. Since
\[
|N_{\hh}(A_1)|> r(k-1){n-a-1\choose r-a-1} > |\cup_{j=2}^{k}A_j|{n-a-1\choose r-a-1},
\]
we can choose $B_1$ from $N_{\hh}(A_1)$ such that $B_1$ is disjoint from $\cup_{j=2}^{k}A_j$. For $i\in \{2,\ldots,k\}$, suppose that $B_1, B_2,\ldots, B_{i-1}$ have been chosen such that $A_1\cup B_1$, $A_2\cup B_2$, $\ldots$, $A_{i-1}\cup B_{i-1}$, $A_i$, $A_{i+1}$, $\ldots$, $A_k$ are pairwise disjoint. Since
\[
|N_{\hh}(A_i)|> r(k-1){n-a-1\choose r-a-1} > \left(\sum_{j=1}^{i-1}|A_j\cup B_j|+\sum_{j=i+1}^{k}|A_j|\right){n-a-1\choose r-a-1},
\]
we can choose $B_i$ from $N_{\hh}(A_i)$ such that $B_i$ is disjoint from $(\cup_{j=1}^k A_j)\cup(\cup_{j=1}^{i-1}B_j)$. Finally, we end up with a matching of size $k$ in $\hh$.
\end{proof}

 We first prove Theorem \ref{th1} (III), because it will be needed in the proof of Theorem \ref{th1} (II).

\begin{proof}[Proof of Theorem \ref{th1} (III)]
 Let $\hh^*$ be the subhypergraph obtained from $\hh$ by deleting all the edges in $\hh$ that are not contained in any $s$-clique in $\hh$. Frankl \cite{fra-shift} proved that any $r$-graph can be shifted to a stable $r$-graph by applying the shifting operator iteratively.
 By Lemma \ref{le4}, we may assume that $\hh^*$ is stable. Since $\lfloor\frac{s-r}{k}\rfloor+1=r$, by Proposition \ref{prop1} we obtain that $|E\cap [rk+r-1]|\geq r$ for every $E\in \hh^*$. So $\hh^*$ is a subhypergraph of $\mathcal{F}^{(r)}_{n,k,r}$, and thus
 \[K_s^r(\hh)\leq K_s^r(\hh^*)\leq K_s^r(\mathcal{F}^{(r)}_{n,k,r}).\] If $K_s^r(\hh)< K_s^r(\mathcal{F}^{(r)}_{n,k,r})$, then $\hh^*$ has to be a proper subhypergraph of $\mathcal{F}^{(r)}_{n,k,r}$. {It follows that} there is an $r$-element subset $T$ of $[rk+r-1]$ such that $T\notin \hh^*$. Then none of the $s$-element subsets of $[rk+r-1]$ containing $T$ can be {an} $s$-clique of $\hh^*$. Note that there are exactly ${rk-1\choose s-r}$  $s$-element subsets of $[rk+r-1]$ containing $T$. {Thus,}
\[
K_s^r(\hh)\leq K_s^r(\hh^*) \leq {rk+r-1\choose s}-{rk-1\choose s-r},
\]
as claimed.
\end{proof}

We are ready to prove Theorem \ref{th1} (II).

\begin{proof}[Proof of Theorem \ref{th1} (II)]
By Lemma \ref{le4}, we may assume that $\hh$ is a stable $r$-graph on $[n]$ and each edge of $\hh$ is contained in an $s$-clique. Note that $a=\lfloor \frac{s-r}{k} \rfloor+1$, so that $(a-1)k+r\leq s\leq ak+r-1$. By Proposition \ref{prop1}, we have $|E\cap[rk+a-1]|\geq a$ for every $E\in \hh$. Define an $a$-graph $\hh^*$ on $[rk+a-1]$ as
\[
\hh^*=\left\{A\in \binom{[rk+a-1]}{a}\colon \deg_{\hh}(A)> rk{n-a-1\choose r-a-1}\right\}.
\]
Now we prove the following two claims, leading to a description  of $\hh^*$.

{\noindent Claim 1.} $\hh^*$ is  stable.

Suppose to the contrary that $\hh^*$ is not  stable. Then, there exist  $i$ and $j$ such that $1\leq i<j\leq n$ and $S_{ij}(\hh^*) \neq \hh^*$. This ensures the existence of an edge $A\in \hh^*$ such that $S_{ij}(A) \neq A$.
By the definition of $\hh^*$, we have $|N_{\hh}(A)|=\deg_{\hh}(A)> rk{n-a-1\choose r-a-1}$. Let $A'=(A\setminus \{j\})\cup \{i\}$. Since $S_{ij}(A) \neq A$, we find that $j\in A$, $i\notin A$ and $A'\notin \hh^*$. Let $B\in N_{\hh}(A)$. If $i\notin B$, since $A\cup B\in \hh$ and $\hh$ is stable, it follows that $A'\cup B\in \hh$. If $i\in B$, since $A\cup B\in \hh$, we see that $A'\cup (B\setminus\{i\})\cup\{j\}=A\cup B\in \hh$. Now we define a map $\tau$ from $N_{\hh}(A)$ to $N_{\hh}(A')$. If $i\notin B$, let $\tau(B)=B$; if $i\in B$, let $\tau(B)=(B\setminus\{i\})\cup\{j\}$. It can be seen that $\tau$ is injective and $|N_{\hh}(A')|\geq |N_{\hh}(A)|> rk{n-a-1\choose r-a-1}$, which contradicts the fact that $A'\notin \hh^*$. This proves the claim.

{\noindent Claim 2.} $\nu(\hh^*)\leq k$.

Suppose to the contrary that $\nu(\hh^*)\geq k+1$. Then, there exist $k+1$ disjoint edges $A_1,A_2,\ldots,A_{k+1}$ in $\hh^*$. Since $\deg_{\hh}(A_i)\geq rk{n-a-1\choose r-a-1}$ for each $i=1,2,\ldots,k+1$, by Lemma \ref{le7} there exists a matching of size $k+1$ in $\hh$, which contradicts the fact {that} $\nu(\hh)\leq k$. {Thus the claim holds.}

Since $|E\cap[rk+a-1]|\geq a$ for every edge $E\in \hh$, every $s$-clique in $\hh$ has at least $s-r+a$ vertices in $[rk-a+1]$. Now we consider the maximum number of $(s-r+a)$-cliques in $\hh^*$. Since $\hh^*$ is an $a$-graph and $(a-1)k+a\leq s-r+a\leq ak+a-1$, by Theorem \ref{th1} (III) we have \[
K_{s-r+a}^a(\hh^*)\leq K_{s-r+a}^a(\mathcal{F}^{(a)}_{n,k,a}) =\binom{ak+a-1}{s-r+a}.\] Moreover, if $K_{s-r +a}^a(\hh^*)< K_{s-r+a}^a(\mathcal{F}^{(a)}_{n,k,a})$, we have \[K_{s-r+a}^a(\hh^*)\leq \binom{ak+a-1}{s-r+a}-\binom{ak-1}{s-r}.\] Next we consider two cases depending on the value of $K_{s-r+a}^a(\hh^*)$.

Case 1. $K_{s-r+a}^a(\hh^*)= K_{s-r+a}^a(\mathcal{F}^{(a)}_{n,k,a})$. If there {exists} an edge $E\in E(\hh)$ with $|E\cap [ak+a-1]|\leq a-1$, then there are $k$ disjoint edges $A_1,A_2,\ldots,A_k$ in $\hh^*-E$. {Noting that there are at most $r{n-a-1\choose r-a-1}$ sets $T$ in $N_{\hh}(A_i)$ such that $|T\cap E|\geq 1$ and that $\deg_{\hh}(A_i)> rk{n-a-1\choose r-a-1}$  for each $i=1,2,\ldots,s-r+1$, we have}
\begin{align*}
\deg_{\hh- E}(A_i)&> rk{n-a-1\choose r-a-1}-r{n-a-1\choose r-a-1}\\[5pt]
&\geq r(k-1){n-r-a-1\choose r-a-1}.
\end{align*}
By Lemma \ref{le7}, there is a matching $\hm$ of size $k$ in $\hh-E$.
 Then $\hm\cup \{E\}$ forms a matching of size $k+1$ in $\hh$, which contradicts the fact that $\nu(\hh)\leq k$. Thus, $\hh$ is a subhypergraph of $\mathcal{F}^{(r)}_{n,k,a}$ and so $K_s^r(\hh)\leq K_s^r(\mathcal{F}^{(r)}_{n,k,a})$.

Case 2. $K_{s-r+a}^a(\hh^*)\leq \binom{ak+a-1}{s-r+a}-\binom{ak-1}{s-r}$. By Proposition \ref{prop1}, we have  $|E\cap[rk+a-1]|\geq a$  for any $E\in \hh$.
Thus, for each $s$-clique $K$ in $\hh$, $|V(K)\cap[rk+a-1]|\geq s-r+a$. We  aim to derive an upper bound on the number of $s$-cliques in $\hh$. First, we choose an $(s-r+a)$-element subset $S$ of $[rk+a-1]$. Then choose an $(r-a)$-element subset $T$ such that $\hh[S\cup T]$ forms an $s$-clique of $\hh$.  It can be seen that  $T$ is a common neighbor of the $a$-element subsets of $S$, that is , $T\in N_{\hh}(R)$ for any $R\in \binom{S}{a}$. {If $\hh^*[S]$ is not an $(s-r+a)$-clique in $\hh^*$}, there exists an $a$-element subset $A$ of $S$ such that $A\notin \hh^*$.
Hence the number of choices for $T$ is at most $\deg_{\hh}(A)\leq rk{n-a-1\choose r-a-1}$. If $\hh^*[S]$ is an $(s-r+a)$-clique in $\hh^*$, then the number of choices for $T$ is at most $\binom{n-(s-r+a)}{r-a}$. Therefore,
\begin{align}\label{eqs3}
K_s^r(\hh)\leq &K_{s-r+a}^a(\hh^*){n-s+r-a\choose r-a}+rk{n-a-1\choose r-a-1}{rk+a-1\choose s-r+a}\nonumber\\[5pt]
\leq&\left({ak+a-1\choose s-r+a}-{ak-1\choose s-r}\right){n-s+r-a\choose r-a}\nonumber\\[5pt] &\qquad\qquad+rk{n-a-1\choose r-a-1}{rk+a-1\choose s-r+a}.
\end{align}

If $s=ak+r-1$, substituting $s=ak+r-1$ into \eqref{eqs3}, we obtain that
\begin{align}\label{eqs21}
K_s^r(\hh)\leq rk{n-a-1\choose r-a-1}{rk+a-1\choose ak+a-1}.
\end{align}
It follows from \eqref{eq3} and \eqref{eq5} that
\begin{align}\label{eqs20}
{n-a-1\choose r-a-1}&\leq \left(\frac{n-r}{n-ak-r+1}\right)^{r-a-1}{n-ak-a\choose r-a-1}\nonumber\\[5pt]
&\leq \left(1+\frac{(ak-1)(r-a-1)^{2}}{n-ak-r+1}\right){n-ak-a\choose r-a-1}.
\end{align}
Since $n\geq4r^2(er/a)^{s-r+a}k$, we see that
\begin{align}\label{eqsa21}
\frac{(ak-1)(r-a-1)^{2}}{n-ak-r+1} \leq 1.
\end{align}
By \eqref{eqs20} and \eqref{eqsa21}, we get
\begin{align}\label{eqs2}
{n-a-1\choose r-a-1}\leq 2{n-ak-a\choose r-a-1}.
\end{align}
Combining \eqref{eqs21} and \eqref{eqs2}, we obtain that
\begin{align}\label{eqs22}
K_s^r(\hh)\leq\ 2rk{rk+a-1\choose ak+a-1}{n-ak-a\choose r-a-1}.
\end{align}
Applying \eqref{eq1} and \eqref{eqs22} gives
\begin{align}\label{eqs23}
K_s^r(\hh)&\leq 2rk\left(\frac{e(rk+a-1)}{ak+a-1}\right)^{ak+a-1}{n-ak-a\choose r-a-1}\nonumber\\[5pt]
&\leq  2kr \left(\frac{er}{a}\right)^{ak+a-1} \frac{r-a}{n-ak-a+1}{n-ak-a+1\choose r-a}.
\end{align}
Under the condition $s=ak+r-1$, we have $ak+a-1=s-r+a$. Since $n\geq 4r^2(er/a)^{s-r+a}k$, it can be checked that
\begin{align}\label{eqs24}
2kr \left(\frac{er}{a}\right)^{ak+a-1} \frac{r-a}{n-ak-a+1} \leq 1.
\end{align}
Combining \eqref{eqs23} and \eqref{eqs24}, we get
\[
K_s^r(\hh)\leq {n-ak-a+1\choose r-a} = K_s^r(\mathcal{F}^{(r)}_{n,k,a}).
\]

It remains to consider the case $(a-1)k+r\leq s<ak+r-1$.  We have
\begin{align}\label{eq7}
{ak+a-1\choose s-r+a}= &\frac{ak+a-1}{s-r+a}\cdot\frac{ak+a-2}{s-r+a-1} \cdots  \frac{ak}{s-r+1}\cdot{ak-1\choose s-r}\nonumber \\[5pt]
\leq &\left(\frac{ak-1}{s-r}\right)^a{ak-1\choose s-r}\nonumber\\[5pt]
\leq &\left(\frac{ak-1}{(a-1)k}\right)^a{ak-1\choose s-r}\nonumber\\[5pt]
\leq &\left(\frac{a}{a-1}\right)^a{ak-1\choose s-r}.
\end{align}
Employing \eqref{eq4} and \eqref{eq7}, we find that
\begin{align}\label{eq8}
{rk+a-1\choose s-r+a}&\leq \left(\frac{e(rk+a-1)}{ak+a-1}\right)^{s-r+a}{ak+a-1\choose s-r+a}\nonumber\\[5pt]
&\leq \left(\frac{er}{a}\right)^{s-r+a}\left(\frac{a}{a-1}\right)^a{ak-1\choose s-r}\nonumber\\[5pt]
&\leq \left(\frac{er}{a-1}\right)^{s-r+a}{ak-1\choose s-r}.
\end{align}
It follows from \eqref{eq3} and \eqref{eq5} that
\begin{align}\label{eqs30}
{n-s+r-a\choose r-a}&\leq \left(\frac{n-s+r-a-(r-a)}{n-ak-a+1-(r-a)}\right)^{r-a}{n-ak-a+1\choose r-a}\nonumber\\[5pt]
&=\left(1+\frac{ak+r-1-s}{n-ak-r+1}\right)^{r-a}{n-ak-a+1\choose r-a}\nonumber\\[5pt]
&\leq \left(1+\frac{(r-a)^2 (ak+r-1-s)}{n-ak-r+1}\right){n-ak-a+1\choose r-a}.
\end{align}
 The condition $(a-1)k+r\leq s< ak+r-1$  implies $ak+r-1-s\leq k$. Since $n\geq 2(ak+r-1)$,  from \eqref{eqs30} we see that
\begin{align}\label{eq9}
{n-s+r-a\choose r-a}
&\leq \left(1+\frac{(r-a)^{2}k}{n-ak-r+1}\right){n-ak-a+1\choose r-a}\nonumber\\[5pt]
&\leq \left(1+\frac{2r^2k}{n}\right){n-ak-a+1\choose r-a}.
\end{align}
Substituting \eqref{eq9} into \eqref{eqs3}, we obtain that
\begin{align}\label{eqs25}
K_s^r(\hh)
&\leq\left({ak+a-1\choose s-r+a}-{ak-1\choose s-r}\right)\left(1+\frac{2r^2k}{n}\right){n-ak-a+1\choose r-a}\nonumber\\[5pt]
&\qquad\qquad+rk{n-a-1\choose r-a-1}{rk+a-1\choose s-r+a}\nonumber\\[5pt]
&\leq \left({ak+a-1\choose s-r+a}-{ak-1\choose s-r}\right){n-ak-a+1\choose r-a}+\frac{2r^2k}{n}{ak+a-1\choose s-r+a}\nonumber\\[5pt]
&\qquad\qquad \cdot{n-ak-a+1\choose r-a}+rk{n-a-1\choose r-a-1}{rk+a-1\choose s-r+a}.
\end{align}
Since each edge of $\mathcal{F}^{(r)}_{n,k,a}$ has at least $a$ vertices in $[ak+a-1]$, it is easy to see that each $s$-clique of $\mathcal{F}^{(r)}_{n,k,a}$ has at least $s-r+a$ vertices in $[ak+a-1]$. Thus,
\begin{align}\label{eqs26}
K_s^r(\mathcal{F}^{(r)}_{n,k,a})= \sum_{i=s-r+a}^s {ak+a-1\choose i}{n-ak-a+1\choose s-i}> {ak+a-1\choose s-r+a}{n-ak-a+1\choose r-a}.
\end{align}
Using the inequalities \eqref{eqs25} and \eqref{eqs26}, we find that
\begin{align}\label{eqs27}
K_s^r(\hh)&<  K_s^r(\mathcal{F}^{(r)}_{n,k,a})-{ak-1\choose s-r}{n-ak-a+1\choose r-a}+\frac{2r^2k}{n}{ak+a-1\choose s-r+a}{n-ak-a+1\choose r-a}\nonumber\\[5pt]
&\qquad\qquad+rk {n-a-1\choose r-a-1}{rk+a-1\choose s-r+a}.
\end{align}
Substituting  (\ref{eq7}), (\ref{eq8}), (\ref{eqs2}) into \eqref{eqs27}, we deduce that
\begin{align}\label{eqs28}
K_s^r(\hh)&\leq  K_s^r(\mathcal{F}^{(r)}_{n,k,a})-{ak-1\choose s-r}{n-ak-a+1\choose r-a}+\frac{2r^2k}{n}\left(\frac{a}{a-1}\right)^a{ak-1\choose s-r}\nonumber\\[5pt]
&\qquad\qquad \cdot{n-ak-a+1\choose r-a}+\left(\frac{er}{a-1}\right)^{s-r+a}{ak-1\choose s-r}\cdot rk\cdot 2 {n-ak-a\choose r-a-1}\nonumber\\[5pt]
&= K_s^r(\mathcal{F}^{(r)}_{n,k,a})-{ak-1\choose s-r}{n-ak-a+1\choose r-a}\nonumber\\[5pt]
&\qquad\qquad \cdot \left(1-\left(\frac{a}{a-1}\right)^a\frac{2r^2k}{n}- \left(\frac{er}{a-1}\right)^{s-r+a}\frac{2rk(r-a)}{n-ak-a+1}\right)
\end{align}
Since $n\geq 4r^2k(er/(a-1))^{s-r+a}$, we obtain that
\begin{align}\label{eqs29}
1-\left(\frac{a}{a-1}\right)^a\frac{2r^2k}{n}- \left(\frac{er}{a-1}\right)^{s-r+a}\frac{2rk(r-a)}{n-ak-a+1} >0.
\end{align}
Combining \eqref{eqs28} and \eqref{eqs29}, we arrive at $K_s^r(\hh)\leq K_s^r(\mathcal{F}^{(r)}_{n,k,a})$. This completes the proof.
\end{proof}

\section{Proof of Theorem \ref{th3}}

Huang, Loh and Sudakov \cite{huang} considered a multicolored generalization of the Erd\H{o}s matching conjecture and provided a sufficient condition on the number of edges for a multicolored hypergraph to contain a rainbow matching of size $k$, as stated in Lemma \ref{le3} below. Theorem 1.6 can be considered as a generalization  of Theorem 1.5. The proof of Theorem \ref{th3} also relies on Lemma \ref{le3}.



Let $\mathcal{F}_1, \mathcal{F}_2, \ldots, \mathcal{F}_{k}$ be $r$-graphs on $[n]$. We say that $\{\mathcal{F}_1, \mathcal{F}_2, \ldots, \mathcal{F}_{k}\}$ contains a rainbow matching if there exist $k$ pairwise disjoint sets $F_1\in \mathcal{F}_1, F_2\in\mathcal{F}_2, \ldots, F_k\in\mathcal{F}_k$.

\begin{lem}[Huang, Loh and Sudakov \cite{huang}]\label{le3}
Let $\mathcal{F}_1, \mathcal{F}_2, \ldots, \mathcal{F}_k$ be $r$-graphs on $[n]$ such that $|\mathcal{F}_i|>(k-1){n-1\choose r-1}$, and $n\geq rk$. Then $\{\mathcal{F}_1, \mathcal{F}_2, \ldots, \mathcal{F}_k\}$ contains a rainbow matching.
\end{lem}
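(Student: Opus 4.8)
The bound $(k-1)\binom{n-1}{r-1}$ is exactly the threshold delivered by the permutation (random partition) method, so that is the approach I would take. Assume first that $r\mid n$ (the general case is handled by ignoring the bounded remainder block, or by padding $[n]$ with isolated dummy vertices). The plan is to choose a uniformly random partition $\Pi=\{B_1,\dots,B_p\}$ of $[n]$ into $p=n/r\ge k$ blocks of size $r$, and to form the bipartite graph $G_\Pi$ whose two sides are the colours $1,\dots,k$ and the blocks, joining $i$ to $B$ precisely when $B\in\mathcal F_i$. Since the blocks of a fixed partition are pairwise disjoint, a matching of $G_\Pi$ saturating the colour side is exactly a rainbow matching $F_1\in\mathcal F_1,\dots,F_k\in\mathcal F_k$. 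Writing $X_i=|\{t:B_t\in\mathcal F_i\}|$ for the number of blocks lying in $\mathcal F_i$, I would first record the easy sufficient condition: if $\min_i X_i\ge k$ then Hall's condition holds for $G_\Pi$, because $|N_{G_\Pi}(S)|\ge\max_{i\in S}X_i\ge k\ge|S|$ for every $S\subseteq[k]$, and hence a rainbow matching exists.

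The core step is a first-moment estimate that explains why the threshold carries no spurious factor of $r$. For any fixed $r$-set $F$ and any position $t$, symmetry gives $\Pr[B_t=F]=1/\binom{n}{r}$, so, using the absorption identity $\binom{n-1}{r-1}=\frac{r}{n}\binom{n}{r}$ together with $p=n/r$,
\begin{align*}
\Pr[F\in\Pi]=\frac{p}{\binom{n}{r}}=\frac{1}{\binom{n-1}{r-1}}.
\end{align*}
By linearity of expectation it follows that for every colour $i$,
\begin{align*}
\mathbb{E}[X_i]=\frac{|\mathcal F_i|}{\binom{n-1}{r-1}}>\frac{(k-1)\binom{n-1}{r-1}}{\binom{n-1}{r-1}}=k-1,
\end{align*}
so each $X_i$ exceeds $k-1$ on average. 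This is precisely where the hypothesis $|\mathcal F_i|>(k-1)\binom{n-1}{r-1}$ enters, and it is what makes the constant tight.

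The remaining, and hardest, step is to pass from ``$\mathbb{E}[X_i]>k-1$ for every $i$'' to the existence of a single partition $\Pi$ for which $G_\Pi$ has no Hall violator. The obstruction is that the bound controls only individual expectations, whereas Hall's condition must hold for all colours simultaneously, and the $X_i$ are neither independent nor obviously concentrated, so a naive union bound over $\Pr[X_i\le k-1]$ does not close. I would attack this by examining a minimal Hall-violating set $S$: minimality forces $|N_{G_\Pi}(S)|=|S|-1$, so all blocks of $\bigcup_{i\in S}\mathcal F_i$ lie among $|S|-1$ blocks, which gives $\sum_{i\in S}X_i\le|S|(|S|-1)$ and hence $X_i\le|S|-1\le k-1$ for some $i\in S$; one then tries to rule this out for a well-chosen partition, for instance one maximizing $\min_i X_i$, via a local block-exchange or a second-moment refinement of the computation above. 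A deterministic fallback is to start from a maximum rainbow matching $M$ of size $m<k$: every edge of an uncovered family must meet $V(M)$, but since the number of $r$-sets meeting the $mr$-set $V(M)$ overshoots $(k-1)\binom{n-1}{r-1}$ by roughly a factor $r$, the counting alone is insufficient and one must run an augmenting/alternating exchange that vacates vertices of $M$. In either route it is this exchange/concentration step, not the expectation identity, that is the genuine difficulty.
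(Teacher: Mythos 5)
First, a point of order: the paper does not prove this lemma at all --- it is stated with attribution and imported as a black box from Huang--Loh--Sudakov \cite{huang}, where it is used later in the proofs of Lemma 4.2 and Theorem 1.6 --- so your attempt can only be judged on its own merits, and on its own merits it is not a proof. Your expectation identity is correct (for $r\mid n$, each block is marginally a uniform $r$-set, so $\mathbb{E}[X_i]=|\mathcal{F}_i|/\binom{n-1}{r-1}>k-1$), but this is the trivial part of the problem: linearity gives, for each colour $i$ \emph{separately}, some partition with $X_i\ge k$, whereas the lemma needs a single partition that is good for all $k$ colours simultaneously, and you explicitly leave that step open. None of the routes you sketch closes it. Note where the naive argument does close: at $n=rk$ one can simply take the diagonal assignment and union bound, since $\Pr[\exists i\colon B_i\notin\mathcal{F}_i]<k\bigl(1-(k-1)r/n\bigr)=1$ when $n=rk$; but for every $n>rk$ this bound exceeds $1$ and the simultaneity problem is genuine. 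Moreover the hypothesis is essentially tight --- taking each $\mathcal{F}_i$ to consist of all $r$-sets meeting a fixed $(k-1)$-set gives $|\mathcal{F}_i|=\binom{n}{r}-\binom{n-k+1}{r}=\sum_{j=1}^{k-1}\binom{n-j}{r-1}$, only barely below the threshold, with no rainbow matching --- so any completion must be nearly lossless. That is exactly why the exchange and counting fallbacks you mention (which, as you correctly diagnose, overshoot by a factor of $r$ because a partial rainbow matching of size $m$ covers $mr$ vertices) cannot be patched by a generic ``augmenting/alternating exchange''; sentences like ``one then tries to rule this out for a well-chosen partition'' are statements of intent, not argument.

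Second, your opening reduction to the case $r\mid n$ is itself broken, for the same tightness reason. Padding $[n]$ with isolated dummy vertices up to $n'=r\lceil n/r\rceil$ raises the threshold: the lemma on the padded ground set would require $|\mathcal{F}_i|>(k-1)\binom{n'-1}{r-1}$, which is strictly stronger than what you are given, so the padded instance need not satisfy the hypothesis. Discarding the remainder instead leaves $p=\lfloor n/r\rfloor$ blocks, and then your computation only yields $\mathbb{E}[X_i]=p|\mathcal{F}_i|/\binom{n}{r}>(k-1)\,rp/n$, which is strictly less than $k-1$ whenever $r\nmid n$; so even the expectation step no longer delivers what you need. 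In short: the one step you prove is the easy one, the reduction preceding it is invalid, and the step carrying all the content of the lemma is missing.
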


Theorem \ref{th3}  will be proved by  induction. The following lemma is
the basis of the induction.

\begin{lem}\label{le6}
Let $n$, $k$ and $r$ be integers  such that $n\geq  4 k^2(er)^{k}$. Let $\mathcal{F}_1, \mathcal{F}_2, \ldots, \mathcal{F}_{k}$ be $r$-graphs on  $[n]$. If for all $i\in \{1,2,\ldots,k\}$, there exists some $s\in \{r,r+1,\ldots,k+r-2\}$ such that $K_s^r(\mathcal{F}_i)>  K_s^r(\mathcal{F}^{(r)}_{n,k-1,1})$.
Then the family $\{\mathcal{F}_1, \mathcal{F}_2, \ldots, \mathcal{F}_{k}\}$ contains a rainbow matching.
\end{lem}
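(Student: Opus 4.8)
The plan is to deduce a strong individual matching property for each family, set up an induction on $k$ whose base case is Lemma \ref{le3}, and carry the hypothesis down one level in $k$ by deleting a single vertex.

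First I would record two reductions. Applying Theorem \ref{th1}(I) with $k-1$ in place of $k$ (its hypothesis $n\ge 4(er)^{s-r+2}(k-1)$ is implied by $n\ge 4k^2(er)^{k}$ since $s\le k+r-2$), the contrapositive shows that $K_s^r(\mathcal{F}_i)>K_s^r(\mathcal{F}^{(r)}_{n,k-1,1})$ forces $\nu(\mathcal{F}_i)\ge k$ for every $i$. Next, a greedy argument shows that if $\nu(\mathcal{F}_i)\ge (k-1)r+1$ for all $i$, then a rainbow matching already exists: having chosen disjoint $F_1,\dots,F_{j-1}$, they cover at most $(k-1)r$ vertices, and since an $r$-graph all of whose edges meet a fixed $t$-set has matching number at most $t$, the family $\mathcal{F}_j$ must contain an edge avoiding them. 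The base case $k=2$ is then immediate: the only admissible witness is $s=r$, so $|\mathcal{F}_i|>K_r^r(\mathcal{F}^{(r)}_{n,1,1})=\binom{n-1}{r-1}=(k-1)\binom{n-1}{r-1}$, and Lemma \ref{le3} applies (here $n\ge rk$ holds trivially).

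For the inductive step I would delete one vertex $v$, recurse on the $k-1$ remaining families over $[n]\setminus\{v\}$ with parameter $k-1$, and then complete the matching inside the family, say $\mathcal{F}_{i_0}$, to which $v$ is charged. Two facts make the bookkeeping of the hypothesis work. Writing $K_s^r(\mathcal{F}_i)=K_s^r(\mathcal{F}_i-v)+K_s^r(v,\mathcal{F}_i)$ together with the injection $K_s^r(v,\mathcal{F}_i)\le K_{s-1}^r(\mathcal{F}_i-v)$ (an $s$-clique through $v$ restricts to an $(s-1)$-clique of $\mathcal{F}_i-v$), and comparing against the identity $K_s^r(\mathcal{F}^{(r)}_{n,k-1,1})=K_s^r(\mathcal{F}^{(r)}_{n-1,k-2,1})+K_{s-1}^r(\mathcal{F}^{(r)}_{n-1,k-2,1})$ from \eqref{eq6}, one sees that after deleting $v$ each surviving family still beats $\mathcal{F}^{(r)}_{n-1,k-2,1}$ either at level $s_i$ or at the lowered level $s_i-1$, keeping the witness inside the smaller admissible range $\{r,\dots,k+r-3\}$. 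To guarantee the drop is by at most one level and never falls below $r$, the charged vertex $v$ must be light in the recursing families; here Lemma \ref{le1} (and Lemma \ref{le7} for the clique version) bounds the number of heavy vertices, and the binomial estimates \eqref{eq1}--\eqref{eq5} together with $n\ge 4k^2(er)^k$ guarantee that a light vertex erodes the clique count only by a lower-order amount, preserving the strict excess.

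The main obstacle is the simultaneous choice of $v$: it must be completable in $\mathcal{F}_{i_0}$, i.e.\ $v$ must lie in an edge of $\mathcal{F}_{i_0}$ avoiding any prescribed set of $(k-1)r$ vertices (which holds when $v$ is heavy in $\mathcal{F}_{i_0}$), while being light in all the other families. I would resolve this through the dichotomy isolated above: families with $\nu(\mathcal{F}_i)\ge (k-1)r+1$ are matching-rich and can be appended greedily at the very end, so they need no charged vertex; a matching-poor family ($\nu(\mathcal{F}_i)\le (k-1)r$) is covered by at most $(k-1)r^2$ vertices, so its clique excess forces a cover vertex of degree of order $n^{r-1}$, far above the threshold in Lemma \ref{le1}, and this heavy vertex can serve as $v$. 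The delicate point, and where the constant in $n\ge 4k^2(er)^k$ is spent, is verifying that such a heavy vertex can be taken light in every other family at once, so that the reduced families genuinely satisfy the lemma for $k-1$; this is the step I expect to demand the most careful counting.
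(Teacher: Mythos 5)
Your preliminary reductions are fine (the contrapositive of Theorem \ref{th1}(I), the greedy step for families with $\nu\ge (k-1)r+1$, and the base case $k=2$ via Lemma \ref{le3}), and the bottom of the witness range does survive deletion: for $s=r$ the gap between thresholds is exactly $\binom{n-1}{r-1}\ge \deg(v)$, so a level-$r$ witness persists. The fatal gap is at the \emph{top} of the range. Suppose the only witness of a recursing family $\mathcal{F}_i$ is $s_i=k+r-2$. The identity \eqref{eq6} only tells you that $\mathcal{F}_i-v$ beats $\mathcal{F}^{(r)}_{n-1,k-2,1}$ at level $k+r-2$ or at level $k+r-3$, and only the latter lies in the admissible range $\{r,\ldots,k+r-3\}$ of the $(k-1)$-instance. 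You cannot force the latter, and the former is worthless: since $K_s^r(\mathcal{F}^{(r)}_{n,k,1})=\sum_{j=s-r+1}^{s}\binom{k}{j}\binom{n-k}{s-j}$, one has $K_{k+r-2}^r(\mathcal{F}^{(r)}_{n-1,k-2,1})=0$ (such a clique would need $k-1$ vertices inside $[k-2]$), so ``beating the threshold at level $k+r-2$'' merely says $\mathcal{F}_i-v$ contains one $(k+r-2)$-clique, which for $k\ge 3$ does not even imply $\nu(\mathcal{F}_i-v)\ge k-1$, let alone the hypothesis of the lemma for $k-1$. Worse, your proposed mechanism works against you: choosing $v$ light so that the excess is preserved keeps the witness at the \emph{inadmissible} level $k+r-2$ rather than dropping it. (In addition, degree-lightness does not control clique erosion: a vertex whose link is a complete $(r-1)$-graph on roughly $(kn^{r-2})^{1/(r-1)}$ vertices is light in the sense of Lemma \ref{le1}, yet for $s$ near $k+r-2$ it lies in far more $s$-cliques than the gap $K_{s-1}^r(\mathcal{F}^{(r)}_{n-1,k-2,1})$, so ``lower-order erosion'' is not available.)

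This is not a repairable technicality, and it explains the paper's architecture. The vertex-deletion induction you describe is essentially the paper's proof of Theorem \ref{th3}, whose induction on $k$ keeps $t$ fixed and therefore only runs while $t\le k+r-3$; Lemma \ref{le6} is exactly the terminal case $t=k+r-2$ where that induction cannot start, so it is proved by a different mechanism. The paper takes a counterexample family maximizing $\sum_i|\mathcal{F}_i|$ (saturation: removing any non-complete $\mathcal{F}_i$ leaves a rainbow matching of the others), defines heavy-vertex sets $X_i,Y_i$, shows via Lemma \ref{le3} that $\{X_1,\ldots,X_l\}$ admits no system of distinct representatives, applies Hall's theorem, and in each resulting case counts the $s$-cliques of some $\mathcal{F}_i$ against the vertex set of a rainbow matching of the remaining families, concluding $K_s^r(\mathcal{F}_i)\le K_s^r(\mathcal{F}^{(r)}_{n,k-1,1})$ for \emph{every} $s$ in the range simultaneously, contradicting the hypothesis. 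The saturation trick and the Hall-type argument are the ideas missing from your plan; any completion of your induction must separately handle families whose only witness sits at level $k+r-2$, and doing so essentially forces the paper's direct argument.
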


\begin{proof}
Let $\{\mathcal{F}_1, \mathcal{F}_2, \ldots, \mathcal{F}_{k}\}$ be a family of $r$-graphs that does not contain any rainbow matching. We may further assume that this family attains the maximum value of $\sum_{i=1}^k |\mathcal{F}_i|$. We shall prove the lemma by showing that there exists some $i$ such that $$K_s^r(\mathcal{F}_i) \leq K_s^r(\mathcal{F}^{(r)}_{n,k-1,1})$$ for any $s\in \{r,r+1,\ldots,k+r-2\}$.

Let $l$ be the number of $r$-graphs in the family $\{\mathcal{F}_1, \mathcal{F}_2, \ldots, \mathcal{F}_{k}\}$ that are not complete $r$-graphs.
Without loss of generality, we may assume that $\mathcal{F}_{1}, \ldots, \mathcal{F}_{l}$ are such non-complete $r$-graphs and $\mathcal{F}_{l+1}, \ldots, \mathcal{F}_{k}$ are complete $r$-graphs. For $l=1$, if $\mathcal{F}_1$ is not an empty $r$-graph, by the definition of $l$, there exist disjoint edges $F_1\in \mathcal{F}_1, F_2\in \mathcal{F}_2,\ldots, F_{k}\in \mathcal{F}_{k}$,  contradicting the assumption that $\{\mathcal{F}_1, \mathcal{F}_2, \ldots, \mathcal{F}_{k}\}$  does not contain any rainbow matching. If  $\mathcal{F}_1$ is  an empty $r$-graph, we have $$K_s^r(\mathcal{F}_1) =0\leq K_s^r(\mathcal{F}^{(r)}_{n,k-1,1})$$ for any $s\in \{r,r+1,\ldots,k+r-2\}$.
Thus, we may assume that $2\leq l\leq k$.

For  $i=1,2,\ldots,l$, let $X_i$ be the set of  vertices $v\in [n]$ such that $\deg_{\mathcal{F}_i}(v)> 2(l-1){n-2\choose r-2}$ and let $Y_i$ be the set of  vertices $v\in [n]$ such that $\deg_{\mathcal{F}_i}(v)\geq r(k-1){n-2\choose r-2}$. It is clear that $Y_i\subseteq X_i$.

{\noindent Claim 3.}
The family $\{X_1, X_2, \ldots, X_l\}$ does not contain a system of distinct representatives.

Suppose to the contrary that there exists a system of distinct representatives in $\{X_1, X_2, \ldots, X_l\}$. Assume that $x_1\in X_1, x_2\in X_2, \ldots, x_l\in X_l$ are $l$ distinct vertices. Let $X=\{x_1,x_2,\ldots,x_l\}$. For $i=1,2,\ldots,l$, define
\[
\hh_i = \left\{T\in \binom{[n]\setminus X}{r-1}\colon T\cup \{x_i\} \in \mathcal{F}_i \right\}.
\]
For any  $i,j\in [l]$ with $i\neq j$, there are at most ${n-2\choose r-2}$ edges of $\mathcal{F}_i$ containing both $x_i$ and $x_j$. Thus, for $i=1,2,\ldots,l$,
\[
|\hh_i|\geq \deg_{\mathcal{F}_i}(v_i)-(l-1){n-2\choose r-2}>(l-1){n-2\choose r-2}\geq (l-1){n-l-1\choose r-2}.
\]

 Since $\hh_i$ is an $(r-1)$-graph on $n-l$ vertices, by Lemma \ref{le3}, there exist $l$ disjoint edges $E_1\in \hh_1, E_2\in \hh_2, \ldots, E_l\in \hh_l$. It follows that $\{E_1\cup \{x_1\},E_2\cup \{x_2\},\ldots,E_l\cup \{x_l\}\}$ forms a rainbow matching in $\{\mathcal{F}_{1}, \ldots, \mathcal{F}_{l}\}$. Since $\mathcal{F}_{l+1}, \ldots, \mathcal{F}_{k}$ are all complete $r$-graphs, there exists a rainbow matching in $\{\mathcal{F}_{1}, \ldots, \mathcal{F}_{k}\}$, a contradiction. This proves the claim.

The following claim shows that if $|X_i|$ and $|Y_i|$ are both small, then the lemma follows.

{\noindent Claim 4.}
If there exists $i\in \{1,2,\ldots,l\}$ such that $|X_i|\leq l-1$ and $|Y_i|\leq l-2$, then $K_s^r(\mathcal{F}_i)\leq  K_s^r(\mathcal{F}^{(r)}_{n,k-1,1})$  for all $r\leq s\leq k+r-2$.

Since $\mathcal{F}_i$ is not a complete $r$-graph, by the maximality of $\sum_{i=1}^k |\mathcal{F}_i|$, we see that $\{\mathcal{F}_1, \ldots, \mathcal{F}_{i-1}, \mathcal{F}_{i+1},  \ldots, \mathcal{F}_{k}\}$ contains a rainbow matching. Let  $$\hm=\{E_1, \ldots, E_{i-1}, E_{i+1},\ldots, E_k\}$$ be such a rainbow matching and  $S$ be the set of vertices that are covered by $\hm$.  For each $s\in\{r, r+1,\ldots, k+r-2\}$, every $s$-clique in $\mathcal{F}_i$ has at least $s-r+1$ vertices in $S$.
To derive an upper bound on the number of $s$-cliques in $\mathcal{F}_i$,  we first choose an $(s-r+1)$-element subset $A$ of $S$.
 Then choose an $(r-1)$-element subset $B$ of $V$, such that $\mathcal{F}_i[B\cup A]$ is an $s$-clique of $\mathcal{F}_i$. It  can be seen that $B$ is a common neighbor in $\mathcal{F}_i$ of  the vertices in $A$, that is, $B\in N_{\mathcal{F}_i}(v)$ for any $v\in A$.

 If $A\subset Y_i$,  the number of choices for $B$ is at most $\binom{n-s+r-1}{r-1}$. If $A\subset X_i$ and $A\nsubseteq Y_i$,  the number of choices for $B$ is at most $r(k-1)\binom{n-2}{r-2}$. If $A\nsubseteq X_i$,  the number of choices for $B$ is at most $2(l-1)\binom{n-2}{r-2}$. Thus,
\begin{align*}
K_s^r(\mathcal{F}_i)&\leq {|Y_i|\choose s-r+1}{n-(s-r+1)\choose r-1}+r(k-1){n-2\choose r-2}{|X_i|\choose s-r+1} \\[5pt]
&\qquad\qquad+2(l-1){n-2\choose r-2}{|S|\choose s-r+1}.
\end{align*}
Since $|Y_i|\leq l-2\leq k-2$, $|X_i|\leq l-1\leq k-1$ and $|S|=r(k-1)$, we find that
\begin{align}\label{eq10}
K_s^r(\mathcal{F}_i)
&\leq {k-2\choose s-r+1}{n-s+r-1\choose r-1}+ r(k-1){n-2\choose r-2}{k-1\choose s-r+1}\nonumber\\[5pt]
&\qquad\qquad+2(k-1){n-2\choose r-2}{r(k-1)\choose s-r+1}.
\end{align}
If $s=k+r-2$, by the inequality \eqref{eq10} we have
\begin{align}\label{eqs4}
 K_s^r(\mathcal{F}_i) &\leq r(k-1){n-2\choose r-2} +2(k-1){n-2\choose r-2}{r(k-1)\choose k-1}\nonumber\\[5pt]
 &\leq 3(k-1){r(k-1)\choose k-1}{n-2\choose r-2}.
\end{align}
Using the inequality \eqref{eq1}, we get
\begin{align}\label{eqs31}
{r(k-1)\choose k-1} \leq (er)^{k-1}.
\end{align}
Employing \eqref{eq3} and \eqref{eq5}, we see that
\begin{align}\label{eqs32}
{n-2\choose r-2} &\leq \left(\frac{n-2-(r-2)}{n-k-(r-2)}\right)^{r-2}{n-k\choose r-2}\nonumber\\[5pt]
&\leq \left(1+\frac{(r-2)^2(k-2)}{n-k-r+2}\right) {n-k\choose r-2}.
\end{align}
 Substituting \eqref{eqs31} and \eqref{eqs32} into \eqref{eqs4}, we obtain that
\begin{align}\label{eqs33}
 K_s^r(\mathcal{F}_i) \leq 3k(er)^{k-1} \left(1+\frac{(r-2)^2(k-2)}{n-k-r+2}\right) \frac{r-1}{n-k+1}\cdot {n-k+1\choose r-1}.
\end{align}
Since $n\geq 4k^2(er)^k$, we have
\begin{align}\label{eqs34}
\frac{(r-2)^2(k-2)}{n-k-r+2} \leq 1
\end{align}
and
\begin{align}\label{eqs35}
\frac{3k(er)^{k}}{n-k+1} \leq 1.
\end{align}
In view of \eqref{eqs33}, \eqref{eqs34} and \eqref{eqs35}, we arrive at
\begin{align*}
K_s^r(\mathcal{F}_i) \leq {n-k+1\choose r-1}\leq K_s^r(\mathcal{F}^{(r)}_{n,k-1,1}).
\end{align*}
It remains to consider the case $r\leq s\leq k+r-3$. Applying \eqref{eq3} and \eqref{eq5} gives
\begin{align}\label{eqx2}
{n-s+r-1\choose r-1} \leq&\left(1+\frac{(k+r-2-s)}{n-k-r+2}\right)^{r-1} {n-k+1\choose r-1}\nonumber\\[5pt]
\leq&  \left(1+\frac{(r-1)^2(k+r-2-s)}{n-k-r+2}\right) {n-k+1\choose r-1}\nonumber\\[5pt]
\leq& \left(1+\frac{2r^2k}{n}\right) {n-k+1\choose r-1},
\end{align}
and \begin{align}\label{eqs45}
{n-2\choose r-2}\leq&\left(\frac{n-2-(r-2)}{n-k-(r-2)}\right)^{r-2}{n-k\choose r-2}\nonumber\\[5pt]
\leq &\left(1+\frac{(r-2)^2(k-2)}{n-k-r+2}\right){n-k\choose r-2}.
\end{align}
Combining \eqref{eq10} and \eqref{eqx2}, we deduce that
\begin{align}\label{eqs43}
K_s^r(\mathcal{F}_i)
&\leq \left(1+\frac{2r^2k}{n}\right){k-2\choose s-r+1} {n-k+1\choose r-1}+r(k-1){k-1\choose s-r+1}{n-2\choose r-2}\nonumber\\[5pt]
&\qquad\qquad+ 2(k-1){r(k-1)\choose s-r+1}{n-2\choose r-2}.
\end{align}
Using the inequality \eqref{eq1}, we get
\begin{align}\label{eqs44}
{r(k-1)\choose s-r+1}\leq (er)^{s-r+1}{k-1\choose s-r+1}.
\end{align}
Substituting \eqref{eqs45} and \eqref{eqs44} into \eqref{eqs43}, we obtain that

\begin{align}\label{eqs46}
K_s^r(\mathcal{F}_i)
&\leq \left(1+\frac{2r^2k}{n}\right){k-2\choose s-r+1} {n-k+1\choose r-1}\nonumber\\[5pt]
&\qquad\qquad+3(er)^{s-r+1}(k-1){k-1\choose s-r+1}\left(1+\frac{(r-2)^2(k-2)}{n-k-r+2}\right){n-k\choose r-2}.
\end{align}
Under the condition $n\geq  4k^2(er)^{k}$, we find that
\begin{align}\label{eqs49}
(k-1)(r-1)\left(1+\frac{(r-2)^2(k-2)}{n-k-r+2}\right)\leq erk.
\end{align}
It follows from \eqref{eqs46} and \eqref{eqs49} that
\begin{align}\label{eqs47}
K_s^r(\mathcal{F}_i)
\leq & \left({k-1\choose s-r+1}- {k-2\choose s-r}\right)\left(1+\frac{2r^2k}{n}\right) {n-k+1\choose r-1} +\frac{3(er)^{s-r+2}k}{n-k+1}\nonumber\\[5pt]
&\qquad\qquad \cdot {n-k+1\choose r-1}{k-1\choose s-r+1}\nonumber\\[5pt]
\leq & {k-1\choose s-r+1}{n-k+1\choose r-1}  \left(1+\frac{3(er)^{s-r+2}k}{n-k+1}+\frac{2r^2k}{n} -\frac{s-r+1}{k-1}\right).
\end{align}
Again, under the condition $n\geq  4k^2(er)^{k}$, we also have
\begin{align}\label{eqs48}
\frac{3(er)^{s-r+2}k}{n-k+1}+\frac{2r^2k}{n} -\frac{s-r+1}{k-1}<0.
\end{align}
Combining \eqref{eqs47} and \eqref{eqs48}, we obtain
\begin{align}
K_s^r(\mathcal{F}_i)\leq{k-1\choose s-r+1}{n-k+1\choose r-1}  \leq K_t^r(\mathcal{F}^{(r)}_{n,s-r+1,1}).
\end{align}
This complete the proof of Claim 4.

By Claim  3 and Hall's Marriage Theorem, there exists  $I\subset [l]$ such that $\left|\cup_{i\in I} X_i\right|<|I|$. By Claim 4, we only need to consider the case when $|X_i|\geq l$ or $|X_i|\geq |Y_i| \geq l-1$ for any $i=1,\ldots,l$. Thus, we may assume that $X_1=X_2=\cdots=X_l=Y_1=Y_2=\cdots=Y_l=\{x_1, x_2, \ldots, x_{l-1}\}$.

{\noindent Claim 5.}
For  $i\in\{1,2,\ldots,l\}$ and $E\in \mathcal{F}_i$, we have $E\cap\{x_1, x_2, \ldots, x_{l-1}\}\neq \emptyset$.

 We may assume that there exists $E\in \mathcal{F}_l$ such that $E\cap\{x_1, x_2, \ldots,x_{l-1}\}= \emptyset$. Since $\deg_{\mathcal{F}_i}(x_i)\geq r(k-1){n-2\choose r-2}$ for  $i=1,\ldots,l-1$,
 there exist disjoint edges $E_1\in \mathcal{F}_1,\ldots, E_{l-1}\in \mathcal{F}_{l-1}$ such that $(\bigcup_{i=1}^{l-1}E_i)\cap E=\emptyset$. Now $E_1,\ldots, E_{l-1},E$ forms a rainbow matching in $\{\mathcal{F}_1,\ldots,\mathcal{F}_{l-1},\mathcal{F}_{l}\}$. Since $\mathcal{F}_{l+1}, \ldots, \mathcal{F}_{k}$ are all complete $r$-graphs, one can find a rainbow matching in $\{\mathcal{F}_{1}, \ldots, \mathcal{F}_{k}\}$, a contradiction. This completes the proof of Claim 5.

Claim 5 implies that $\mathcal{F}_i$ is isomorphic to a subhypergraph of $\mathcal{F}^{(r)}_{n,l-1,1}$ for any $i=1,\ldots,l$. Consequently,
\[
K_s^r(\mathcal{F}_i)\leq K_s^r(\mathcal{F}^{(r)}_{n,l-1,1}) \leq K_s^r(\mathcal{F}^{(r)}_{n,k-1,1})
\]
for $r\leq s\leq k+r-2$. Therefore, this completes the proof.
\end{proof}

We are now in a position to  prove Theorem \ref{th3}. Notice that Theorem \ref{th3} is  implied by Lemma \ref{le6} for sufficiently large $n$. That is, when $n\geq 4k^2(er)^k$, if there exists $s\in \{r,r+1,\ldots,t\}$ such that $K_s^r(\mathcal{F}_i)>K_s^r(\mathcal{F}^{(r)}_{n,k-1,1})$ for $i=1,\ldots, k$,   then the family $\{\mathcal{F}_1, \mathcal{F}_2, \ldots, \mathcal{F}_{k}\}$ contains a rainbow matching. In the following proof of Theorem \ref{th3}, the lower bound on $n$ is improved to $n\geq4k(t-r+2)(er)^{t-r+1}$ for $t\leq k+r-2$.


\begin{proof}[Proof of Theorem \ref{th3}]
We proceed by induction on $k$. By Lemma \ref{le6}, the theorem holds for $k=t-r+2$ and $n\geq 4k(t-r+2)(er)^{t-r+2}$. Now  assume that the theorem holds for $k-1$.

Suppose that there exist  $v\in V$ and $i\in[k]$ such that $\{\mathcal{F}_1-v, \ldots, \mathcal{F}_{i-1}-v, \mathcal{F}_{i+1}-v,\ldots, \mathcal{F}_k-v\}$ does not contain any rainbow matching. By the induction hypothesis, there exists $j\in[k]\setminus\{i\}$ satisfying $K_s^r(\mathcal{F}_j-v)\leq K_s^r(\mathcal{F}^{(r)}_{n-1,k-2,1})$ for all $r\leq s\leq t$. For $s=r$, we have $$K_{r}^r(\mathcal{F}_j)\leq K_r^r(\mathcal{F}_j-v)+{n-1\choose r-1}\leq K_r^r(\mathcal{F}^{(r)}_{n,k-1,1}).$$
For $r+1\leq s\leq t$, by the equality (\ref{eq6}) we find that
\begin{align*}
K_{s}^r(\mathcal{F}_j)&=K_s^r(\mathcal{F}_j-v)+K_s^r(v,\mathcal{F}_j)\\[5pt]
&\leq K_s^r(\mathcal{F}_j-v)+K_{s-1}^r(\mathcal{F}_j-v)\\[5pt]
&\leq K_s^r(\mathcal{F}^{(r)}_{n-1,k-2,1})+K_{s-1}^r(\mathcal{F}^{(r)}_{n-1,k-2,1})\\[5pt]
&=K_s^r(\mathcal{F}^{(r)}_{n,k-1,1}).
\end{align*}

Suppose that for any $v\in V$ and  $i\in[k]$, $\{\mathcal{F}_1-v, \ldots, \mathcal{F}_{i-1}-v, \mathcal{F}_{i+1}-v,\ldots, \mathcal{F}_k-v\}$ contains a rainbow matching.
 This assumption implies that the maximum degree of each $\mathcal{F}_i$ is at most $r(k-1){n-2\choose r-2}$, otherwise we may find a rainbow matching  using the greedy algorithm.
For $i=1,2,\ldots,k$, let $X_i$ be the set of vertices $u\in V$ such that $d_{\mathcal{F}_i}(v)> 2(k-1){n-2\choose r-2}$.
 By the same argument as in  Claim 3 of Lemma \ref{le6}, we deduce that there exists  $j$ such that $|X_j|\leq k-1$. Let $\hm=\{E_1,\ldots,E_{j-1},E_{j+1},\ldots,E_k\}$ be a rainbow matching in $\{\mathcal{F}_1, \ldots, \mathcal{F}_{j-1}, \mathcal{F}_{j+1},\ldots, \mathcal{F}_k\}$ and let $S$ be the set of vertices covered by $\hm$.
 Note that each $s$-clique in $\mathcal{F}_{j}$ has at least $s-r+1$ vertices in $S$.

 We  wish to derive an upper bound on the number of $s$-cliques in $\mathcal{F}_{j}$. First, we choose a set $A$ of $(s-r+1)$ vertices in $S$.  There are at most ${|S|\choose s-r+1}$ {choices} for $A$. Then choose an $(r-1)$-element subset $B$ of $[n]$ such that $\mathcal{F}_{j}[B\cup A]$ is an $s$-clique of $\mathcal{F}_{j}$. It  can be seen that in the hypergraph $\mathcal{F}_j$, $B$ is a common neighbor of  the vertices in $A$, that is, $B\in N_{\mathcal{F}_{j}}(v)$ for any $v\in A$.
 If $A$ is a subset of $X_j$,  the number of choices for $B$ is at most $r(k-1){n-2\choose r-2}$. If $A$ is not a subset of $X_j$,  the number of choices for $B$ is at most $2(k-1){n-2\choose r-2}$. Thus,
\begin{align}\label{eqs36}
K_s^r(\mathcal{F}_i)\leq r(k-1){n-2\choose r-2}{k-1\choose s-r+1}+ 2(k-1){n-2\choose r-2}{r(k-1)\choose s-r+1}.
\end{align}
The inequality \eqref{eq4} yields
 \begin{align}\label{eq37}
 {r(k-1)\choose s-r+1} \leq (er)^{s-r+1}{k-1\choose s-r+1}.
 \end{align}
 It follows from \eqref{eqs36} and \eqref{eq37} that
 \begin{align}\label{eqs38}
K_s^r(\mathcal{F}_i)
\leq&\left(r(k-1)+2(er)^{s-r+1}(k-1)\right){k-1\choose s-r+1}{n-2\choose r-2}\nonumber\\[5pt]
\leq&3(er)^{s-r+1}(k-1){k-1\choose s-r+1}{n-2\choose r-2}.
\end{align}
Using the  inequalities \eqref{eq3} and \eqref{eq5}, we see that
\begin{align}\label{eqs39}
{n-2\choose r-2} &\leq \left(\frac{n-r}{n-k-r+2}\right)^{r-2} {n-k\choose r-2}\nonumber\\[5pt]
&=\left(1+\frac{k-2}{n-k-r+2}\right)^{r-2} {n-k\choose r-2}\nonumber\\[5pt]
&\leq \left(1+\frac{(r-2)^2(k-2)}{n-k-r+2}\right) {n-k\choose r-2}.
\end{align}
Combining \eqref{eqs38} and \eqref{eqs39}, we obtain that
\begin{align}\label{eqs40}
K_s^r(\mathcal{F}_i)\leq & 3(er)^{s-r+1}(k-1){k-1\choose s-r+1}\left(1+\frac{(r-2)^2(k-2)}{n-k-r+2}\right){n-k\choose r-2}
\end{align}
Since $n\geq 4k(t-r+2)(er)^{t-r+2}$, we find that
\begin{align}\label{eqs41}
\frac{(r-2)^2(k-2)}{n-k-r+2} \leq \frac{1}{3}
\end{align}
and
\begin{align}\label{eqs42}
4(er)^{s-r+1}(k-1)\cdot \frac{r-1}{n-k+1}\leq 1.
\end{align}
In view of \eqref{eqs40}, \eqref{eqs41} and \eqref{eqs42}, we conclude that
\[
K_s^r(\mathcal{F}_i)\leq {k-1\choose s-r+1}{n-k+1\choose r-1} \leq K_s^r(\mathcal{F}^{(r)}_{n,k-1,1}).
\]
This completes the proof.
\end{proof}

\noindent{\bf Acknowledgements.}
 We are grateful to the referee for valuable comments, leading to an improvement of an earlier version. The second author was supported by National Natural Science Foundation of China (No. 11701407) and Shanxi Province Science Foundation for Youths (No. 201801D221028 and No. 201801D221193).

\end{document}